\def\BibTeX{{\rm B\kern-.05em{\sc i\kern-.025em b}\kern-.08em
    T\kern-.1667em\lower.7ex\hbox{E}\kern-.125emX}}
\newtheorem{defi}{Definition}
\newtheorem{thm}{Theorem}
\newtheorem{prop}{Proposition}
\newtheorem{corol}{Corollary}
\newtheorem{lem}{Lemma}
\newtheorem{Ass}{Assumption}
\newcommand{\ubar}[1]{\underline{#1}}
\begin{document}
\title{Concave Comparison Functions for Accelerating Constrained Lyapunov Decay}

\author{Shuyuan Fan,  Guanru Pan, and Herbert Werner
\thanks{
The authors are with the Institute of Control Systems,  Hamburg University of Technology,
Harburger Schloßstraße 22a,
21079 Hamburg, Germany (e-mail: shuyuan.fan@tuhh.de; 
       guanru.pan@tuhh.de; h.werner@tuhh.de )  }
}

\maketitle

\begin{abstract}
What limits how fast a Lyapunov function can decay under input bounds?
We address this question by showing how the \emph{shape} of Lyapunov comparison functions governs guaranteed decay for control–affine systems. Using a windowed \emph{nominal exponential rate}  together with the endpoint cap induced by actuator limits, we establish a strict ordering: concave comparison functions strictly outperform linear and convex ones, and \emph{strict concavity is necessary} to improve the best achievable global exponential rate under a fixed endpoint cap. We derive a computable lower bound on the required actuation level for a target nominal rate and show that only concave shaping can reduce this level under the endpoint cap. We then establish a feasibility-preserving acceleration result: whenever a margin exists on a sublevel set, a feasible linear comparison can be replaced by a concave one that preserves feasibility while strictly increasing the guaranteed windowed decay. Finally, we give a tunable rational concave factor with controlled slope that yields a constructive design and integrates  with CLF–QP, as illustrated by examples.
\end{abstract}
\begin{IEEEkeywords}
Lyapunov methods, concave comparison functions, input saturation, CLF–QP
\end{IEEEkeywords}

\section{Introduction}
Safety–critical systems (e.g., robotics, automotive, power, and aerospace systems) routinely operate under hard actuator bounds. These bounds cap the certified Lyapunov decay rate and thus the guaranteed settling time. This paper asks: \emph{What fundamentally limits Lyapunov decay under input bounds, and how should the comparison function be shaped to exploit available actuation most effectively?}

\subsection{Comparison principle}
The comparison principle is a cornerstone of Lyapunov analysis for nonlinear systems. In its basic form (e.g., \cite[Lem.~4.4]{lin1996smooth}, \cite[Secs.~3.4,\,4.4]{khalil2002nonlinear}, and \cite[App.~A]{Mironchenkbook}), if a smooth positive–definite $V:\mathbb{R}^n\to\mathbb{R}_{\geq0}$ satisfies
\begin{equation}\label{eq:intro-V-ineq}
  \dot V(x) \le -\alpha\big(V(x)\big),
\end{equation}
for some class-$\mathcal K$ function $\alpha:\mathbb{R}_{\geq0}\to\mathbb{R}_{\geq0}$ (cf. the \emph{comparison function} or \emph{Lyapunov decay rate}), then we obtain the \emph{comparison scalar ODE} (upper-bound system):
\begin{equation}\label{eq:intro-ub}
  \dot y = -\alpha(y),\qquad y(0)=V(x_0),
\end{equation}
which provides a class-$\mathcal{KL}$ estimate that
\begin{align}\label{eq:intro KL}
    \dot{V}(x(t))\leq \beta(y_0,t), \quad \forall t\geq0.
\end{align}
where $\beta(y_0,t)$ is the solution of \eqref{eq:intro-ub}.
 Thus, the upper–bound system \eqref{eq:intro-ub} yields a rigorous worst–case decay bound on $V(x(t))$ for estimation and analysis.

\subsection{Background and Motivation}
Beyond analysis, comparison functions play a constructive role in the control Lyapunov function (CLF) based control. In the CLF framework,  originating with Artstein \cite{ArtsteinCLF} and made explicit by Sontag’s universal formula \cite{sontag1989universal}, the decay constraint  $\dot{V}(x,u)\leq-\alpha(V)$ (cf. inequality \eqref{eq:V-ineq}) is enforced as a design condition to certify stabilizability and parameterize the admissible feedback set.
 In practice, three types of comparison functions are mainly considered:
(i) the \emph{linear} comparison $\alpha(V)=\sigma V$ yielding exponential decay with rate $\sigma$,
(ii) the \emph{root}  comparison $\alpha(V)=C V^{p}$  with $C>0$ and $p\in(0,1)$ guaranteeing finite-time convergence (see 
\cite{FTC}), and
(iii) \emph{polynomial} mixtures such as $\alpha(V)=C_1 V^{p}+C_2 V^{q}$ with $0<p<1<q$,
 invoked for fixed-time convergence (see \cite{Fixtime}).

The latter two are non-Lipschitz at the origin and are rarely used in optimization-based control
because they complicate KKT regularity and state–control mappings\cite{mestres2025regularity}. In addition, the over-aggressive decay around the origin, induced by the non-Lipschitz properties, contradicts the well-posedness theorem (cf. \cite[Thm 3.1]{khalil2002nonlinear}) and causes chattering (see \cite{ChatteringAnalysis}). Consequently, the linear
type $\alpha(V)=\sigma V$ has become the default performance constraint in many CLF-driven
designs, e.g., CLF/CBF-QP (see \cite{CLFreview,inverseCLF,CLF-bipedal,ames2019control}), control contraction metrics based optimal control (see \cite{CCM}), and LMI control formulations (see \cite{LMI}), with the exponential rate $\sigma$ serving as the performance metric.

In a constrained CLF setting \cite{Mhaskar2005,constraintCLF,Mhaskar2006}, actuator limits impose level-wise caps on admissible Lyapunov decay, thereby constraining feasible CLFs and their decay rates. From the CLF--QP perspective with a linear comparison $\alpha(V)=\sigma V$, feasibility under input limits is typically maintained by either (i) reducing $\sigma$ (conservative), or (ii) introducing a penalized slack $\delta\ge0$ in the QP, which relaxes the constraint and thus weakens the certified decay. With the linear comparison, $\sigma$ becomes the sole performance objective; acceleration strategies therefore either \emph{schedule} it (e.g., rapidly exponentially stabilizing CLF--QP \cite{ames2014rapidly}) or \emph{optimize} it online as a decision variable (e.g., flexible ES--CLF--QP \cite{FlexCLF} and optimal-decay CLF--QP \cite{OD-CLF-QP}). This single-rate reliance limits design flexibility for decay shaping under actuator saturation.

Rather than only scaling the rate (increasing $\sigma$), we ask a complementary question: How does the \emph{shape} of $\alpha$ (linear, convex, concave, or mixtures) mediate the trade–off between feasibility and guaranteed decay under fixed input bounds? Figure~\ref{fig:sketch_of_comparison} gives a quick visual: under a common endpoint cap $\alpha(c)$, a strictly concave $\alpha$ lies above the linear and convex ones on $(0,c)$ and thus certifies a larger guaranteed decay (equivalently, a larger windowed nominal rate $\sigma_{\alpha}(\epsilon,c)$).

\begin{figure}
    \centering
    \includegraphics[width=0.7\linewidth]{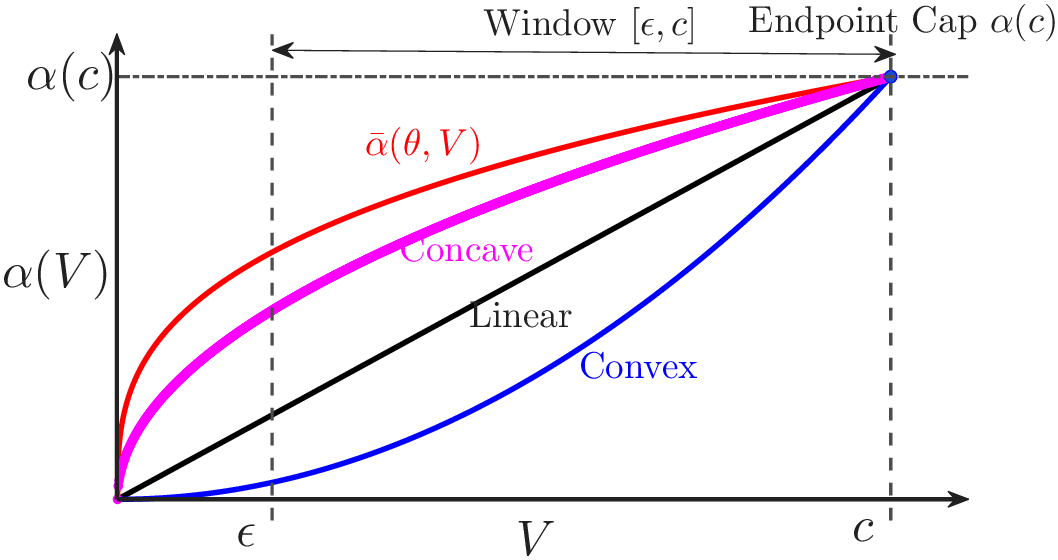}
    \caption{Sketch: decay cap, endpoint cap, evaluation window $[\epsilon,c]$. The concave curve lies above linear and convex comparison, hence the largest $\sigma_\alpha(\epsilon,c)$ is guaranteed.}
    \label{fig:sketch_of_comparison}
\end{figure}

\subsection{Contributions}
We develop a shape-aware framework for Lyapunov decay under input saturation, evaluated through windowed performance metrics. Our main contributions are:
\begin{itemize}
  \item \textbf{Decay acceleration by the concave comparison.}
  For the upper-bound ODE \eqref{eq:intro-ub}, on any window $[\epsilon,c]$ and under a common endpoint cap, strictly concave comparisons guarantee a strictly larger windowed nominal rate $\sigma_\alpha(\epsilon,c)$ than linear or convex ones. Moreover, any improvement over the linear baseline with the same endpoint cap requires \emph{strict} concavity on a nontrivial subset of $(0,c)$.
  
  \item \textbf{Lower peak actuation for higher certified rate.}
  We derive the actuation level required to achieve a target windowed rate and show that, under a linear comparison, higher nominal rates demand higher (endpoint–dominated) actuation level. In contrast, concave shaping achieves a strictly larger $\sigma_\alpha(\epsilon,c)$ with a smaller actuation level.

  \item \textbf{Feasibility-preserving acceleration.}
  Whenever a margin exists on a sublevel set, a feasible linear comparison can be replaced by a concave one that preserves feasibility while increasing the guaranteed decay over the window.
  
  \item \textbf{Constructive concave design.}
  We introduce a Lipschitz rational concave factor with closed-form normalization and slope control, enabling practical tuning and  integration with CLF–QP.
\end{itemize}

\subsection*{Notation}
$\mathbb{R}$, $\mathbb{R}_{\ge0}$, and $\mathbb{R}_{>0}$ denote the real, nonnegative real, and positive real numbers.
For $x\in\mathbb{R}^n$, $\|x\|$ is the Euclidean norm,  $\|x\|_1=\sum_{i=1}^{n}|x_i|$,
and $\|x\|_\infty=\max_i |x_i|$. 
For a matrix $A$, $A^\top$ is its transpose;  $\mathrm{diag}(\cdot)$ builds a diagonal matrix; $A\succ 0$ ($A\prec0$) means positive (negative) definite. For $A\succ 0$, $\lambda_{\min}(A)$ and $\lambda_{\max}(A)$ denote its smallest and largest eigenvalues. 
All functions are locally Lipschitz unless stated otherwise; $C^k$ denotes $k$-times continuously differentiable. For $f:\mathbb{R}\to\mathbb{R}$, $f'(x)=\frac{\mathrm{d}f}{dx}$. 
 For $f:\mathbb{R}^n\to\mathbb{R}$, the gradient is
$\nabla f(x)=\big[\partial f/\partial x_1,\dots,\partial f/\partial x_n\big]^\top$.
Time derivatives are denoted by a dot, e.g., $\dot x$. Lie derivatives $L_f V:=\nabla V^\top f$, $L_g V:=\nabla V^\top g$. $\alpha\in\mathcal K$ means $\alpha:[0,a)\to[0,\infty)$ is continuous, $\alpha(0)=0$, and strictly increasing on $[0,a)$. $\alpha\in\mathcal{K}_{vex}/\mathcal{K}_{cave}/\mathcal{K}_{l}$ implies $\alpha\in\mathcal{K}$ and $\alpha$ is strictly convex/strictly concave/linear, respectively.

\section{Preliminaries}\label{sec:prelim}
We consider the control‐affine system
\begin{equation}\label{eq:system}
  \dot x = f(x) + g(x)u,
\end{equation}
with $x\in\mathbb{X}\subset\mathbb{R}^n$ and $u\in\mathbb{U}(\theta)\subset\mathbb{R}^m$, where $f:\mathbb{X}\to\mathbb{R}^n$ and $g:\mathbb{X}\to\mathbb{R}^{n\times m}$ are locally Lipschitz with $f(0)=0$  and with $g(0)\ne0$. Inputs are bounded in:
\begin{align}
  \mathbb{U}(\theta) \doteq \{\,u\in\mathbb{R}^m \mid \|u\|_\infty \le \theta\,\}
\end{align}
where $\theta>0$ represents  the \emph{actuation level}. 
For an initial condition $x(0)=x_0$ and an admissible input $u(\cdot)\in \mathbb{U}(\theta)$ (measurable, essentially bounded), let $\phi(t,x_0,u)$ denote the Carathéodory solution of \eqref{eq:system}. We assume $\mathbb{X}$ is compact, contains the origin, and is \emph{control–invariant} with respect to $\mathbb{U}(\theta)$; i.e., for every $x_0\in\mathbb{X}$ there exists an input $u(\cdot)\in\mathbb{U}(\theta)$ such that $\phi(t,x_0,u)\in\mathbb{X}$ for all $t\ge0$.

\subsection{Control Lyapunov functions and comparison}\label{subsec:clf}
We begin with a standard CLF definition.

\begin{defi}[Control Lyapunov function]
A smooth, positive–definite $V:\mathbb{R}^n\to\mathbb{R}_{\ge0}$ is a CLF for \eqref{eq:system} on $\mathbb{X}$ with inputs in $\mathbb{U}(\theta)$ if there exist
$0<k_1\le k_2$ and a class-$\mathcal{K}$ function $\alpha$ such that, for all $x\in\mathbb{X}$,
\begin{subequations}\label{eq:clf}
\begin{equation}
  k_1\|x\|^2 \;\le\; V(x) \;\le\; k_2\|x\|^2,
\label{eq:clf-bounds}
\end{equation}
\begin{equation}
  \inf_{u\in\mathbb{U}(\theta)}\big\{\, L_fV(x)+L_gV(x)\,u+\alpha\big(V(x)\big) \big\}\le0.
\label{eq:clf-decay}
\end{equation}
\end{subequations}
\end{defi}
The corresponding admissible feedback set (same as \cite{ames2014rapidly})
\[
  K(x)\doteq\Big\{u\in\mathbb{U}(\theta): L_fV(x)+L_gV(x)u\le-\alpha\big(V(x)\big)\Big\}
\]
is nonempty for all $x\in\mathbb{X}$ by \eqref{eq:clf-decay}. Any (measurable) selection $u(x)\in K(x)$ enforces the differential inequality
\begin{equation}\label{eq:V-ineq}
  \dot V(x,u)\doteq L_fV(x)+L_gV(x)u\le-\alpha\big(V(x)\big),
\end{equation}
which aligns with \eqref{eq:intro-V-ineq} and yields the scalar comparison bound $V(x(t))\le y(t)$, where $y(t)$ solves \eqref{eq:intro-ub}.
\subsubsection*{Linear comparison (exponential stabilizing CLF)}
If \eqref{eq:clf} holds with the linear comparison $\alpha(V)=\sigma V$ ($\sigma>0$), then for any $u(x)\in K(x)$ the closed-loop system
\begin{align}
     \dot{x}=f(x)+g(x)u(x)
\end{align}
has the solution $x(t)$ satisfying the explicit estimate
\begin{equation}\label{eq:exp-bound}
  \|x(t)\|\;\le\;\sqrt{\frac{k_2}{k_1}}e^{-\frac{\sigma}{2}t}\|x(0)\|,\qquad \forall\,t\ge0,
\end{equation}
i.e., global exponential convergence with rate $\sigma/2$. Thus, larger $\sigma$ yields faster state convergence, linking the Lyapunov decay enforced by \eqref{eq:V-ineq} to state bounds via the comparison framework \eqref{eq:intro-V-ineq}-\eqref{eq:intro KL}.

\subsection{Windowed decay metrics}\label{subsec:metrics}
Consider $0<\epsilon<c\le c_{\max}$ and  define the sublevel set
\[
  \Omega(V,c)\doteq\{x : V(x)\le c\},\quad
  c_{\max}\doteq\sup\{v:\Omega(V,v)\subseteq \mathbb{X}\}.
\]
Given an evaluation window $[\epsilon,c]$, we introduce windowed decay metrics for evaluating performance on this window.

\begin{defi}[Nominal rate]
Let the \emph{crossing time} $T(\epsilon,c)$ be the first time at which the trajectory $V(\phi(t,x_0,u))$, initialized on the level set $V(x_0)=c$, enters $\Omega(V,\epsilon)$. The associated \emph{nominal exponential decay rate (simply nominal rate)} over the window is
\begin{equation}\label{eq:sigma-nom}
  \sigma_{\mathrm{nom}}(\epsilon,c)\doteq\frac{\ln(c/\epsilon)}{T(\epsilon,c)}.
\end{equation}
\end{defi}

For the comparison system \eqref{eq:intro-ub} with  $\alpha\in\mathcal{K}$, let  $T_{\alpha}(\epsilon,c)$ denote the crossing time of its solution $y(t)$ with $y(0)=c$. By  the chain rule, we obtain
\begin{equation}\label{eq:Talpha-def}
  T_{\alpha}(\epsilon,c)\;=\;\int_{\epsilon}^{c}\frac{1}{\alpha(y)}\,\mathrm{d}y.
\end{equation}
Define its windowed \emph{nominal rate}
\begin{equation}\label{eq:sigma-alpha}
  \sigma_{\alpha}(\epsilon,c)\;\doteq\;\frac{\ln(c/\epsilon)}{T_{\alpha}(\epsilon,c)}.
\end{equation}
By the comparison principle \eqref{eq:intro-V-ineq}–\eqref{eq:intro-ub} (which yields $V(x(t))\le y(t)$ for all $t\ge0$), the comparison trajectory reaches $\epsilon$ no earlier than the true trajectory:
\begin{equation}\label{eq:T-ineq}
  T_{\alpha}(\epsilon,c)\ge T(\epsilon,c)\quad\iff\quad
  \sigma_{\alpha}(\epsilon,c)\le\sigma_{\mathrm{nom}}(\epsilon,c).
\end{equation}
Thus, $\sigma_{\alpha}(\epsilon,c)$ is a \emph{conservative} (certified) estimate of the true nominal rate on $[\epsilon,c]$.

\subsubsection*{Linear case}
For the linear comparison $\alpha(V)=\sigma V$,
\[
  T_{\alpha}(\epsilon,c)=\frac{1}{\sigma}\ln\!\frac{c}{\epsilon},
  \qquad
  \sigma_{\alpha}(\epsilon,c)\equiv\sigma\quad\text{(for any window $[\epsilon,c]$).}
\]
Unlike the strictly pointwise constraint $\dot V\le -\sigma V$ (constant rate at every level), the windowed metric \eqref{eq:sigma-alpha} aggregates the guaranteed decay \emph{over the window}, allowing structural reshaping of $\alpha$ (e.g., concavity) under a fixed endpoint cap to trade interior decay against feasibility. For each two adjacent subintervals $[\epsilon,c_0], [c_0,c]$, the relation of the related nominal rate is presented in \eqref{eq:rate-fraction}.
\begin{lem}[Composition of nominal rates]\label{cor:rate-composition}
Consider $0<\epsilon<c_0<c$ and a class-$\mathcal K$ comparison function $\alpha$. Define
\[\sigma_1\doteq \sigma_\alpha(\epsilon,c_0),  
\sigma_2 \doteq \sigma_\alpha(c_0,c), 
\lambda \doteq {\ln(c/c_0)}/{\ln(c/\epsilon)} \in (0,1).\]
Then, the nominal rate on $[\epsilon,c]$ satisfies:
\begin{subequations}
     \begin{align}
        \label{eq:rate-harmonic}
\frac{1}{\sigma_\alpha(\epsilon,c)}
\;=\; \frac{1-\lambda}{\sigma_1} \;+\; \frac{\lambda}{\sigma_2}
    \end{align}
\begin{equation}\label{eq:rate-fraction}
\sigma_\alpha(\epsilon,c)
\;=\;
\frac{\sigma_1\,\sigma_2}{\,\lambda\,\sigma_1 + (1-\lambda)\,\sigma_2\,}.
\end{equation}
\begin{equation}\label{eq:rate-bounds}
\min\{\sigma_1,\sigma_2\}
\;\le\;
\sigma_\alpha(\epsilon,c)
\;\le\;
\max\{\sigma_1,\sigma_2\},
\end{equation}
\end{subequations}
with equality in \eqref{eq:rate-bounds} if and only if $\sigma_1=\sigma_2$ (since $\lambda\in(0,1)$).
\end{lem}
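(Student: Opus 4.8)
The plan is to reduce everything to two elementary additivity facts plus a reciprocal identity, after which all three claims follow by algebra.

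First I would exploit the \emph{additivity of the crossing time}. Since $T_\alpha(\epsilon,c)=\int_\epsilon^c \mathrm{d}y/\alpha(y)$ by \eqref{eq:Talpha-def}, splitting the integral at $c_0$ gives $T_\alpha(\epsilon,c)=T_\alpha(\epsilon,c_0)+T_\alpha(c_0,c)$. Writing $T_1\doteq T_\alpha(\epsilon,c_0)$ and $T_2\doteq T_\alpha(c_0,c)$, the definitions of $\sigma_1,\sigma_2$ let me substitute $T_1=\ln(c_0/\epsilon)/\sigma_1$ and $T_2=\ln(c/c_0)/\sigma_2$.

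Second, I would use the \emph{additivity of the logarithms}, $\ln(c/\epsilon)=\ln(c_0/\epsilon)+\ln(c/c_0)$. This identifies the two log-gaps with fractions of $\ln(c/\epsilon)$, namely $\ln(c/c_0)=\lambda\ln(c/\epsilon)$ and $\ln(c_0/\epsilon)=(1-\lambda)\ln(c/\epsilon)$, directly from the definition of $\lambda$. Dividing the time-additivity relation $T_\alpha(\epsilon,c)=T_1+T_2$ by $\ln(c/\epsilon)$ and inserting these two substitutions yields $1/\sigma_\alpha(\epsilon,c)=(1-\lambda)/\sigma_1+\lambda/\sigma_2$, which is \eqref{eq:rate-harmonic}; taking reciprocals and clearing the common denominator gives \eqref{eq:rate-fraction}. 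For the bounds \eqref{eq:rate-bounds}, I would read \eqref{eq:rate-harmonic} as expressing $1/\sigma_\alpha(\epsilon,c)$ as a convex combination of $1/\sigma_1$ and $1/\sigma_2$ with weights $1-\lambda$ and $\lambda$. Since such a combination lies between $\min\{1/\sigma_1,1/\sigma_2\}$ and $\max\{1/\sigma_1,1/\sigma_2\}$, passing to reciprocals (order-reversing on the positive reals) delivers $\min\{\sigma_1,\sigma_2\}\le\sigma_\alpha(\epsilon,c)\le\max\{\sigma_1,\sigma_2\}$. Because $\lambda\in(0,1)$, the convex combination is a \emph{strict} interior point unless $1/\sigma_1=1/\sigma_2$, giving the stated equality condition.

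The argument involves no genuine obstacle; the only point demanding care is the \emph{bookkeeping of weights} — confirming that $\lambda$, built from $\ln(c/c_0)$ on the upper subinterval, correctly multiplies $1/\sigma_2$, the rate on $[c_0,c]$, so that the indices in \eqref{eq:rate-harmonic} are not transposed. The conceptual content is simply that the certified rate aggregates as a \emph{weighted harmonic mean} in the log-length coordinate, exactly as average speeds combine over distances.
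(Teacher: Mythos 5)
Your proposal is correct and follows essentially the same route as the paper's proof: additivity of the crossing-time integral at $c_0$, division by $\ln(c/\epsilon)$ to identify the weights $1-\lambda$ and $\lambda$, and the weighted-harmonic-mean interpretation for the bounds and equality case. You merely spell out the weight bookkeeping and the order-reversing reciprocal step more explicitly than the paper does; nothing is missing.
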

\begin{proof}
By definition,
\(
T_\alpha(\epsilon,c)
= T_\alpha(\epsilon,c_0)+T_\alpha(c_0,c),
\)
\[
\frac{1}{\sigma_\alpha(\epsilon,c)}
=\frac{T_\alpha(\epsilon,c)}{\ln(c/\epsilon)}
=\frac{T_\alpha(\epsilon,c_0)}{\ln(c/\epsilon)}
 +\frac{T_\alpha(c_0,c)}{\ln(c/\epsilon)}
=\frac{1-\lambda}{\sigma_1}+\frac{\lambda}{\sigma_2},
\]
which is \eqref{eq:rate-harmonic}; \eqref{eq:rate-fraction} is algebraically rearranged. Since \eqref{eq:rate-harmonic} is a weighted harmonic mean of $\sigma_1,\sigma_2$ with weights $1-\lambda$ and $\lambda$ in $(0,1)$, \eqref{eq:rate-bounds} follows, with equality iff $\sigma_1=\sigma_2$.
\end{proof}

This lemma is useful for shaping the nominal 
rate: it allows one to maintain a desired overall rate while relaxing the decay constraint at high Lyapunov levels to preserve feasibility under input bounds.

\subsection{Induced endpoint cap under input bounds}\label{subsec:feasibility}
Consider a smooth CLF $V$. Define the pointwise  decay cap
\begin{equation}\label{eq:Dmax_def}
  D_{\max}(x,\theta)\;\doteq\;\max_{u\in\mathbb{U}(\theta)}\big\{-L_f V(x)-L_g V(x)\,u\big\}.
\end{equation}
For $\mathbb{U}(\theta)=\{u:\|u\|_\infty\le\theta\}$, linear optimization over  $u$ gives
\begin{align}
     D_{\max}(x,\theta)=-L_f V(x)+\theta\,\|L_g V(x)\|_1.
\end{align}
At point $x$, an admissible decay $\alpha(V(x))$ should satisfy
\begin{align}\label{eq: pointwise condition}
    \alpha\big(V(x)\big)\;\le\;D_{\max}(x,\theta),
\end{align}
which represents the pointwise feasibility condition. The sublevel set $\Omega(V,v)$ induces a level-wise cap 
\begin{equation}\label{eq:alpha_cap}
  \bar{\alpha}(\theta,v)\doteq \sup_{x\in\Omega(V,v)} D_{\max}(x,\theta), \quad 0<v\leq c_{\max}.
\end{equation}
which yields the necessary condition at level $V=v$:
\begin{align}\label{eq:level-wise condition}
    \alpha(v)\leq \bar{\alpha}(\theta,v).
\end{align}
Since $\Omega(V,v)\subseteq\Omega(V,c)$ for $v\le c$, $\bar{\alpha}(\theta,v)\le \bar{\alpha}(\theta,c)$. Hence,
\[\forall 0<V\leq c,\qquad\alpha(V)\;\le\;\alpha(c)\;\le\;\bar{\alpha}(\theta,c).
\]
Thus, under input bounds, on any window $[\epsilon,c]$ the admissible $\alpha(V)$ is capped by the \emph{endpoint cap} $\bar{\alpha}(\theta,c)$, motivating shape design of $\alpha$ under a fixed endpoint.
\subsubsection*{Global exponential rate}
For the linear comparison $\alpha(V)=\sigma V$, the endpoint condition $\alpha(c)=\sigma c\le \bar{\alpha}(\theta,c)$ gives
\[
  \sigma\;\le\;{\bar{\alpha}(\theta,c)}/{c}\doteq\sigma_c.
\]
Actuator limits therefore impose an endpoint cap on the achievable global exponential rate. Any $\sigma>\sigma_c$ violates the necessary cap at $V=c$ for which the decay inequality is infeasible. Consequently, uniform feasibility over $\Omega(V,c)$ cannot be guaranteed.

\subsection{Comparison functions}
We mainly focus on class-$\mathcal{K}$ comparison functions. According to shape, we classify $\alpha\in\mathcal{K}$ into four classes: linear $\mathcal{K}_l$, strictly concave $\mathcal{K}_{cave}$, strictly convex $\mathcal{K}_{vex}$, and mixed
\[\mathcal{K}_{mix}\doteq\mathcal{K}\setminus(\mathcal{K}_l\cup\mathcal{K}_{cave}\cup\mathcal{K}_{vex}).\]
Note that any $\alpha\in\mathcal{K}$ admits a quasi-linear representation:
\begin{align}
    \alpha(y)= s(y)y\label{eq:alpha_quasilinear}
\end{align}
 where $s:\mathbb{R}_{\geq0}\to\mathbb{R}_{>0}$ denotes the dynamic (scaling) factor defined as:
 \begin{align}\label{de:dynamic factor}
s(y)\doteq\begin{cases}
     \frac{\alpha(y)}{y},\qquad y>0,\\
\underset{{y\to0_+}}{\lim}\frac{\alpha(y)}{y}\in[0,\infty],\quad y=0.
 \end{cases}     
 \end{align}
Here we leverage the monotonic properties of the dynamic factor about the shape of $\alpha$.
\begin{lem}[Dynamic-factor characterization]\label{Lem monotonicity of s}
    Suppose $\alpha\in\mathcal{K}$ with dynamic factor $s$. The following statements hold:
    \begin{itemize}
        \item[1)] $s(y)$ is a constant if $\alpha\in\mathcal{K}_l$;
        \item[2)] $s(y)$ is strictly decreasing in $y$ if $\alpha\in\mathcal{K}_{cave}$;
        \item[3)] $s(y)$ is strictly increasing in $y$ if $\alpha\in\mathcal{K}_{vex}$.
    \end{itemize}
\end{lem}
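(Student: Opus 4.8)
The plan is to recognize the dynamic factor $s(y)=\alpha(y)/y$ as the slope of the secant (chord) joining the origin $(0,\alpha(0))=(0,0)$ to the point $(y,\alpha(y))$, exploiting the class-$\mathcal K$ normalization $\alpha(0)=0$. Monotonicity of such origin-anchored chord slopes is precisely the classical characterization of convexity and concavity, so all three cases follow directly from the definition of (strict) convexity/concavity, with no differentiability assumption on $\alpha$ required.

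For part 1), if $\alpha\in\mathcal K_l$ then $\alpha(y)=\sigma y$ for some $\sigma>0$, whence $s(y)=\sigma$ for every $y>0$, and $s(0)=\sigma$ by the limit in \eqref{de:dynamic factor}; thus $s$ is constant.

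For parts 2) and 3), I would fix arbitrary $0<y_1<y_2$ in the domain and introduce the convex weight $t\doteq y_1/y_2\in(0,1)$, so that $y_1=t\,y_2+(1-t)\cdot 0$. If $\alpha\in\mathcal K_{cave}$, strict concavity together with $\alpha(0)=0$ gives $\alpha(y_1)>t\,\alpha(y_2)+(1-t)\alpha(0)=(y_1/y_2)\,\alpha(y_2)$; dividing by $y_1>0$ yields $s(y_1)>s(y_2)$, so $s$ is strictly decreasing. If $\alpha\in\mathcal K_{vex}$, the reversed strict inequality gives $\alpha(y_1)<(y_1/y_2)\,\alpha(y_2)$ and hence $s(y_1)<s(y_2)$, so $s$ is strictly increasing. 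Monotonicity of $s$ on $(0,\infty)$ then forces the one-sided limit in \eqref{de:dynamic factor} to exist (as a decreasing limit in the concave case and an increasing limit, possibly $+\infty$, in the convex case), so the extended $s$ inherits the stated monotonicity at the boundary point $y=0$ as well.

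Since each case reduces to a single application of the definition of (strict) convexity/concavity once the chord is anchored at the origin, I expect no substantial obstacle. The only points requiring care are (i) invoking $\alpha(0)=0$ to place the anchor at the origin, which is what converts the convexity inequality into a statement about $\alpha(y)/y$, and (ii) handling the boundary value $s(0)$ through the limit in \eqref{de:dynamic factor} rather than through a direct chord, which is settled by monotone convergence once monotonicity on $(0,\infty)$ is established.
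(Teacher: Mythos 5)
Your proposal is correct and follows essentially the same route as the paper's proof: both apply the definition of strict concavity/convexity to the convex combination $\lambda y+(1-\lambda)\cdot 0$, using $\alpha(0)=0$ to turn the chord inequality into monotonicity of $s(y)=\alpha(y)/y$. Your write-up is in fact slightly more careful than the paper's (the paper's displayed inequality contains a typo, $\lambda\alpha(\lambda y)$ where $\lambda\alpha(y)$ is meant, and it does not discuss the boundary value $s(0)$, which you correctly settle by monotone one-sided limits).
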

\begin{proof}
    The linear case is obvious. By the definition of strictly concave, for $0<\lambda<1$ and $y>0$, we have 
    \begin{align*}
        \alpha(\lambda y+(1-\lambda)0)>\lambda\alpha(\lambda y)+(1-\lambda)\alpha(0)
    \end{align*}
    which implies $s(\lambda y)>s(y)$ such that $s(y)$ is decreasing. Similarly, we can prove if $\alpha(y)$ is convex, $s(y)$ is increasing.
\end{proof}

These properties will be used for analysis and construction. 
\begin{defi}[Concave Factor]\label{def:concave factor}
    For $\alpha\in\mathcal{K}_{cave}$, the associated $s$ in \eqref{eq:alpha_quasilinear} is called a concave factor that  concavifies $y$.
\end{defi}

The concave factor serves as a practical design handle for constructing Lipschitz class-$\mathcal{K}_{\mathrm{cave}}$ comparison functions.

\section{Concave Comparison Functions for Constrained Upper-Bound Decay}
\label{sec:concave}
This section quantifies how the \emph{shape} (concave, linear, convex) of a class-\(\mathcal{K}\) comparison function \(\alpha\) governs the decay of the upper-bound system \eqref{eq:intro-ub} on the evaluation window \([\epsilon, c]\), subject to the endpoint cap \(\alpha(c)\le \bar\alpha(\theta,c)\). Unless stated otherwise, “concave’’ and “convex’’ mean strictly concave/convex.

\subsection{Decay ordering and acceleration}
We mainly compare three classes $\mathcal{K}_l$, $\mathcal{K}_{cave}$, and $\mathcal{K}_{vex}$ under the same
endpoint value $\alpha(c)$, and claim that $\mathcal{K}_{mix}$ functions lie in  between. As a reminder, a class-$\mathcal K$ function is continuous, strictly increasing,
and satisfies $\alpha(0)=0$; hence it is locally Lipschitz on $[\epsilon,c]$. We write $\alpha_l(y)=\sigma y\in\mathcal{K}_l$, $\alpha_{{cave}}\in\mathcal{K}_{{cave}}$,
and $\alpha_{{vex}}\in\mathcal{K}_{{vex}}$. 

\begin{prop}[Ordering under equal endpoint cap]\label{prop:ordering}
Let $\alpha_l(c)=\alpha_{cave}(c)=\alpha_{vex}(c)$.
Then for any $\epsilon\in(0,c)$,
\[
  \sigma_{\alpha_{vex}}(\epsilon,c)\;<\;\sigma_{\alpha_l}(\epsilon,c)\;=\;\frac{\alpha_l(c)}{c}
  \;<\;\sigma_{\alpha_{cave}}(\epsilon,c),
\]
where $\sigma_{\alpha_{(\cdot)}}(\epsilon,c)$ denote the nominal rate defined in \eqref{eq:sigma-alpha}.
\end{prop}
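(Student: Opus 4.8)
The plan is to reduce the rate ordering to a crossing-time ordering and then to a pointwise comparison of the three comparison functions. Since $\sigma_\alpha(\epsilon,c)=\ln(c/\epsilon)/T_\alpha(\epsilon,c)$ from \eqref{eq:sigma-alpha} and the numerator $\ln(c/\epsilon)$ is identical for all three functions, the claimed ordering $\sigma_{\alpha_{vex}}<\sigma_{\alpha_l}<\sigma_{\alpha_{cave}}$ is equivalent to the reversed ordering of crossing times $T_{\alpha_{cave}}(\epsilon,c)<T_{\alpha_l}(\epsilon,c)<T_{\alpha_{vex}}(\epsilon,c)$. By \eqref{eq:Talpha-def} each crossing time is $T_\alpha(\epsilon,c)=\int_\epsilon^c \alpha(y)^{-1}\,\mathrm{d}y$, so it suffices to compare the three functions pointwise on $(\epsilon,c)$ and then integrate their reciprocals.

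The crux is the observation that the linear comparison is exactly the secant line joining the shared endpoints. All three functions are class-$\mathcal K$, hence $\alpha(0)=0$, and they share the common value $\alpha_l(c)=\alpha_{cave}(c)=\alpha_{vex}(c)$; writing $\sigma\doteq\alpha_l(c)/c$, the line $y\mapsto\sigma y=\alpha_l(y)$ is precisely the chord through $(0,0)$ and $(c,\alpha_l(c))$. Strict concavity then places $\alpha_{cave}$ strictly above this chord and strict convexity places $\alpha_{vex}$ strictly below it, giving $\alpha_{vex}(y)<\alpha_l(y)<\alpha_{cave}(y)$ for every $y\in(0,c)$. Equivalently, Lemma~\ref{Lem monotonicity of s} delivers this through the dynamic factor: at the common endpoint $s(c)=\sigma$ for all three, so for $y<c$ monotonicity of $s$ yields $s_{vex}(y)<\sigma<s_{cave}(y)$, whence $\alpha_{vex}(y)=s_{vex}(y)\,y<\sigma y<s_{cave}(y)\,y=\alpha_{cave}(y)$.

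To finish, I would invert and integrate. On the compact window $[\epsilon,c]$ with $\epsilon>0$ each $\alpha$ is continuous and strictly positive, so the integrals in \eqref{eq:Talpha-def} are finite, and the pointwise inequalities invert to $\alpha_{cave}(y)^{-1}<\alpha_l(y)^{-1}<\alpha_{vex}(y)^{-1}$ on $(\epsilon,c)$. Integrating over this interval of positive length yields the strict crossing-time ordering above, and translating back through $\sigma_\alpha=\ln(c/\epsilon)/T_\alpha$ delivers the claim, with $\sigma_{\alpha_l}(\epsilon,c)=\sigma=\alpha_l(c)/c$ coming directly from the linear computation in Section~\ref{subsec:metrics}. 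The argument is essentially routine; the only points demanding care are \emph{strictness} — which requires the strict secant inequality to hold on the entire open interval $(0,c)$, as strict concavity/convexity guarantees, so that reciprocal integration over $(\epsilon,c)$ stays strict — and confirming there is no integrability obstruction, which is automatic since $\epsilon>0$ keeps the integrands bounded away from the potential singularity at the origin.
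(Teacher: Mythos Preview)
Your proposal is correct and follows essentially the same route as the paper: establish the pointwise ordering $\alpha_{vex}(y)<\alpha_l(y)<\alpha_{cave}(y)$ on $(0,c)$ via the dynamic-factor monotonicity of Lemma~\ref{Lem monotonicity of s} (equivalently, the secant/chord inequality), invert and integrate to get the crossing-time ordering, and translate back to $\sigma_\alpha$. Your write-up is in fact more careful than the paper's, which contains a minor slip in stating $s_{cave}(c)>s_l(c)>s_{vex}(c)$ when these are in fact equal at the common endpoint; you handle this correctly by noting $s(c)=\sigma$ for all three and then invoking strict monotonicity of $s$ on $(0,c)$.
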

\begin{proof}
    Let $s_{l},s_{cave},s_{vex}$ be the dynamic factors as defined in \eqref{de:dynamic factor} of $\alpha_l,\alpha_{cave},\alpha_{vex}$, respectively. Then, $s_{cave}(c)>s_{l}(c)>s_{vex}(c)$. By Lem.\ref{Lem monotonicity of s}, for $y\in[\epsilon,c)$, $s_{cave}(y)>s_{l}(y)>s_{vex}(y)$, such that $T_{\alpha_{cave}}(\epsilon,c)<T_{\alpha_l}(\epsilon,c)<T_{\alpha_{vex}}(\epsilon,c)$. Equivalently, we have $\sigma_{\alpha_{vex}}(\epsilon,c)<\sigma_{\alpha_l}(\epsilon,c)<\sigma_{\alpha_{cave}}(\epsilon,c)$.
\end{proof}

Figure~\ref{fig:sketch_of_comparison} depicts the equal–endpoint case at $y=c$. By geometry,
a strictly concave $\alpha_{{cave}}$ lies above its chord while a strictly convex
$\alpha_{{vex}}$ lies below; hence, for all $y\in(0,c)$,
\[
\alpha_{{cave}}(y)\;>\;\alpha_{l}(y)\;>\;\alpha_{{vex}}(y),
\] which supports the above proposition. Moreover, concave comparisons can achieve a faster windowed decay even when the endpoint value is reduced.

\begin{thm}[Acceleration under a relaxed endpoint cap]
\label{thm:relaxed-endpoint-accel}
Let $\alpha_l(y)=\sigma y$ be the linear baseline on $(0,c]$.
\noindent\emph{(i)} 
If $\alpha_{vex}\in\mathcal{K}_{vex}$ (strictly convex) and $\alpha_{vex}(c)\le \alpha_l(c)$, then for every $\epsilon\in(0,c)$, $ \sigma_{\alpha_{vex}}(\epsilon,c)\ <\ \sigma$.
\noindent\emph{(ii)}
Suppose $\alpha_{cave}\in\mathcal{K}_{cave}$ (strictly concave) with
$\alpha_{cave}(c)<\alpha_l(c)$ and there exists $y^\star\in(0,c)$ such that
$\alpha_{cave}(y^\star)>\alpha_l(y^\star)$, then there exists $c_0\in(y^\star,c)$
and $\epsilon_0\in(0,c_0)$ such that for all $\epsilon\in(0,\epsilon_0)$,
  $\sigma_{\alpha_{cave}}(\epsilon,c)\ >\ \sigma$.
\end{thm}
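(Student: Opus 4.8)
The plan is to route both parts through two tools already in hand: the dynamic-factor characterization (Lemma~\ref{Lem monotonicity of s}), which converts shape hypotheses on $\alpha$ into monotonicity of $s(y)=\alpha(y)/y$, and the crossing-time integral $T_\alpha(\epsilon,c)=\int_\epsilon^c \alpha(y)^{-1}\,\mathrm dy$, which is monotone decreasing in $\alpha$ pointwise. The uniform device throughout is to bracket $\alpha_{(\cdot)}$ by scaled linear functions $\rho y$ and read off time (hence rate) comparisons by integrating reciprocals. Part (i) then follows immediately: the endpoint bound $\alpha_{vex}(c)\le\sigma c$ reads $s_{vex}(c)\le\sigma$, and since $\alpha_{vex}\in\mathcal K_{vex}$ makes $s_{vex}$ strictly increasing, $s_{vex}(y)<s_{vex}(c)\le\sigma$ for all $y\in(0,c)$, i.e.\ $\alpha_{vex}(y)<\sigma y=\alpha_l(y)$ on $(0,c)$. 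Integrating the reciprocal over $[\epsilon,c]$ gives $T_{\alpha_{vex}}(\epsilon,c)>\tfrac1\sigma\ln(c/\epsilon)=T_{\alpha_l}(\epsilon,c)$, hence $\sigma_{\alpha_{vex}}(\epsilon,c)<\sigma$.

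For part (ii) the plan is to combine a \emph{uniform} interior speed-up on a low-level window with the vanishing high-level weight in the composition law. First translate the interior hypothesis $\alpha_{cave}(y^\star)>\sigma y^\star$ into $s_{cave}(y^\star)>\sigma$. Because $\alpha_{cave}\in\mathcal K_{cave}$ forces $s_{cave}$ strictly decreasing and continuous, I can choose $c_0\in(y^\star,c)$ close enough to $y^\star$ that $\delta:=s_{cave}(c_0)-\sigma>0$. Monotonicity of $s_{cave}$ then yields $s_{cave}(y)\ge s_{cave}(c_0)=\sigma+\delta$ for every $y\in(0,c_0]$, i.e.\ $\alpha_{cave}(y)\ge(\sigma+\delta)y$ there. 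Integrating reciprocals on $[\epsilon,c_0]$ bounds the low-level time, $T_{\alpha_{cave}}(\epsilon,c_0)\le\frac{1}{\sigma+\delta}\ln(c_0/\epsilon)$, which gives the crucial $\epsilon$-independent margin $\sigma_1:=\sigma_{\alpha_{cave}}(\epsilon,c_0)\ge\sigma+\delta$ for all $\epsilon\in(0,c_0)$.

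Next I would invoke Lemma~\ref{cor:rate-composition} on the split $[\epsilon,c]=[\epsilon,c_0]\cup[c_0,c]$, with $\sigma_2:=\sigma_{\alpha_{cave}}(c_0,c)$ a fixed positive constant and weight $\lambda=\ln(c/c_0)/\ln(c/\epsilon)$. The harmonic-mean identity, together with the margin above, gives
\[
\frac{1}{\sigma_{\alpha_{cave}}(\epsilon,c)}=\frac{1-\lambda}{\sigma_1}+\frac{\lambda}{\sigma_2}\le\frac{1-\lambda}{\sigma+\delta}+\frac{\lambda}{\sigma_2}.
\]
As $\epsilon\to0^+$ we have $\lambda\to0$, so the right-hand side tends to $\tfrac{1}{\sigma+\delta}<\tfrac1\sigma$; by continuity in $\lambda$ there exists $\lambda_0\in(0,1)$, and correspondingly $\epsilon_0:=c\,(c_0/c)^{1/\lambda_0}\in(0,c_0)$, such that the bound stays strictly below $1/\sigma$ for every $\epsilon\in(0,\epsilon_0)$, i.e.\ $\sigma_{\alpha_{cave}}(\epsilon,c)>\sigma$. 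Solving the affine-in-$\lambda$ inequality explicitly produces the threshold, which is satisfied for all $\lambda\in(0,1)$ when $\sigma_2\ge\sigma+\delta$ and otherwise cuts off at a definite positive $\lambda_0$.

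The step I expect to be the main obstacle is establishing the \emph{uniform} gap $\sigma_1\ge\sigma+\delta$ rather than a merely pointwise $\alpha_{cave}>\alpha_l$ comparison. A pointwise inequality on $(0,c_0)$ only yields $\sigma_1>\sigma$, a margin that can collapse as $\epsilon\to0$ and might then be overwhelmed by the drag of the below-linear endpoint segment $[c_0,c]$, where $\alpha_{cave}(c)<\sigma c$. The decisive move is to use the monotonicity of the concave factor to dominate $\alpha_{cave}$ from below by the \emph{steeper} linear function $(\sigma+\delta)y$ on the whole low-level window, thereby upgrading the single crossing at $y^\star$ into an $\epsilon$-uniform rate margin. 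Once that margin is secured, the vanishing weight $\lambda\to0$ in Lemma~\ref{cor:rate-composition} closes the argument with no further estimation.
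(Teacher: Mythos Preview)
Your proof is correct and takes essentially the same route as the paper: dynamic-factor monotonicity (Lemma~\ref{Lem monotonicity of s}) for both parts, plus the composition identity (Lemma~\ref{cor:rate-composition}) and a small-$\epsilon$ limit for part~(ii). The only variation is cosmetic---you pick $c_0$ with $s_{cave}(c_0)=\sigma+\delta>\sigma$ to secure an explicit $\epsilon$-uniform margin $\sigma_1\ge\sigma+\delta$, whereas the paper chooses $c_0$ at the exact crossing $s_{cave}(c_0)=\sigma$ and closes via monotonicity of $\sigma_\alpha(\cdot,c)$; your version is arguably cleaner since it sidesteps that last appeal.
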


\begin{proof}
Write $\alpha(y)=s(y)\,y$ with the dynamic factor $s(y)=\alpha(y)/y$. \emph{(i) Convex case.} If $\alpha_{vex}$ is strictly convex, then $s_{vex}$ is strictly
increasing on $(0,c]$. Hence $s_{vex}(y)\le s_{vex}(c)$ for all $y\in(0,c]$, and
\[
  T_{\alpha_{vex}}(\epsilon,c)
  \;=\;\int_\epsilon^c \frac{dy}{s_{vex}(y)\,y}
  \;\ge\;\frac{1}{s_{vex}(c)}\int_\epsilon^c \frac{dy}{y}
  \;=\;\frac{\ln(c/\epsilon)}{s_{vex}(c)}.
\]
Since $\alpha_{vex}(c)\le \alpha_l(c)=\sigma c$, we have $s_{vex}(c)\le\sigma$, hence
$T_{\alpha_{vex}}(\epsilon,c)\ge \frac{\ln(c/\epsilon)}{\sigma}=T_{\alpha_l}(\epsilon,c)$.
Strict convexity makes the inequality strict, so
$\sigma_{\alpha_{vex}}(\epsilon,c)<\sigma$. \emph{(ii) Concave case.} If the condition holds, then
$s_{cave}(y^\star)>\sigma$ and $s_{cave}(c)<\sigma$. Because $s_{cave}$ is strictly
decreasing and continuous, there exists a unique $c_0\in(y^\star,c)$ with
$s_{cave}(c_0)=\sigma$, i.e., $\alpha_{cave}(c_0)=\alpha_l(c_0)$.
Over $(0,c_0)$ one has $s_{cave}>\sigma$, hence
$\sigma_{\alpha_{cave}}(\epsilon,c_0)>\sigma$ for all $\epsilon\in(0,c_0)$; over
$(c_0,c)$ one has $s_{cave}<\sigma$, hence $\sigma_{\alpha_{cave}}(c_0,c)<\sigma$. Using the two-interval compositions (cf. ~\eqref{eq:rate-fraction})
\[
  \sigma_{\alpha}(\epsilon,c)
  = \frac{\sigma_{\alpha}(\epsilon,c_0)\,\sigma_{\alpha}(c_0,c)}
         {\lambda\,\sigma_{\alpha}(\epsilon,c_0)+(1-\lambda)\,\sigma_{\alpha}(c_0,c)},
   ~\lambda=\frac{\ln(c/c_0)}{\ln(c/\epsilon)},
\]
we see that as $\epsilon\to 0$ (so $\lambda\to 0$),
$\lim_{\epsilon\to0^+}\sigma_{\alpha_{cave}}(\epsilon,c)= \sigma_{\alpha_{cave}}(0^+,c_0)>\sigma$. Using monotonicity, one can simply pick $\epsilon_0$ with $\sigma_{\alpha_{{cave}}}(\epsilon_0,c)>\sigma$; then for any smaller $\epsilon$ the inequality still holds.
\end{proof}

Proposition~\ref{prop:ordering} and Theorem~\ref{thm:relaxed-endpoint-accel} together show that
 class-$\mathcal{K}_{cave}$ comparison functions enable acceleration with \emph{endpoint relaxation}:
for suitable windows $[\epsilon,c]$, one can have
\[
\alpha_{cave}(c)\;<\;\alpha_l(c)\quad\text{while}\quad \sigma_{\alpha_{cave}}(\epsilon,c)\;>\;\sigma={\alpha_l(c)}/{c}.
\]

\subsubsection*{Mixed shapes via window decomposition}
For $\alpha_{{mix}}\in\mathcal{K}_{{mix}}$ we partition the window
$[\epsilon,c]$ into subwindows
\[
\epsilon=a_0<a_1<\cdots<a_N=c,
\]
on each of which $\alpha_{{mix}}$ is (i) strictly concave, (ii) linear, or (iii) strictly convex.
For $y\in[a_{i-1},a_i]$, denote the chord (secant) of $\alpha_{{mix}}$ by
\[
\chi_i(y)\doteq
\frac{\alpha_{{mix}}(a_i)-\alpha_{{mix}}(a_{i-1})}{a_i-a_{i-1}}(y-a_{i-1})
+\alpha_{{mix}}(a_{i-1}).
\]
By additivity of the crossing-time integral,
\[
T_{\alpha_{{mix}}}(\epsilon,c)
=\sum_{i=1}^N \int_{a_{i-1}}^{a_i}\frac{1}{\alpha_{{mix}}(y)}\,dy.
\]
On a concave subwindow, $\alpha_{{mix}}(y)>\chi_i(y)$ and the local integral
$\int_{a_{i-1}}^{a_i}\!\frac{1}{\alpha_{{mix}}}\,dy$ is \emph{reduced} relative to the chord;
on a linear subwindow, $\alpha_{{mix}}=\chi_i$ (neutral);
on a convex subwindow, $\alpha_{{mix}}(y)<\chi_i(y)$ and the local integral is \emph{increased}. Consequently,
only strictly concave subwindows contribute to acceleration (relative to the chord),
linear subwindows are neutral, and convex subwindows degrade it.

\subsection{Necessity of concavity under an endpoint cap}
To further study comparison functions under the endpoint constraint,
we introduce the following metric.
\begin{defi}[Endpoint-Relaxation ratio]\label{def:RR}
Define
\begin{align}\label{eq: relaxation ratio}
    r_{\alpha}(\epsilon,c)\doteq \frac{\alpha(c)}{\sigma_{\alpha}(\epsilon,c)\,c}, ~0<\epsilon<c,
\end{align}
where $\sigma_{\alpha}(\epsilon,c)$ is defined in \eqref{eq:sigma-alpha}.
\end{defi}

Interpretation: $r_\alpha(\epsilon,c)<1$ (relaxation) reflects same rate with a \emph{smaller} endpoint than linear, $r_\alpha=1$ represents same rate with the \emph{same} endpoint as linear, and   $r_\alpha>1$ means achieving the same rate requires a \emph{larger} endpoint than linear.
For the linear comparison $\alpha_l$, $r_{\alpha_l}\equiv 1$ on any window. 

\begin{prop}[Ordering for endpoint-relaxation ratio]\label{prop:RR}
For any $[\epsilon,c]\subset(0,\infty)$, we obtain
\[\quad r_{\alpha_{cave}}(\epsilon,c)<1=r_{\alpha_l}(\epsilon,c)<r_{\alpha_{vex}}(\epsilon,c)\]
whenever $\alpha_{cave}\in\mathcal{K}_{cave}$ and $\alpha_{vex}\in\mathcal{K}_{vex}$. 
\end{prop}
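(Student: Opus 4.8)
The plan is to express the relaxation ratio purely in terms of the dynamic factor $s(y)=\alpha(y)/y$ and then recognize it as a weighted average whose value is pinned down by the monotonicity already established in Lemma~\ref{Lem monotonicity of s}. Starting from the definition \eqref{eq: relaxation ratio} and substituting $\sigma_\alpha(\epsilon,c)=\ln(c/\epsilon)/T_\alpha(\epsilon,c)$ together with the crossing-time integral \eqref{eq:Talpha-def} written as $T_\alpha(\epsilon,c)=\int_\epsilon^c \frac{dy}{s(y)\,y}$, and using $\alpha(c)=s(c)\,c$, I would derive the identity
\[
r_\alpha(\epsilon,c)=\frac{s(c)\,T_\alpha(\epsilon,c)}{\ln(c/\epsilon)}
=\frac{1}{\ln(c/\epsilon)}\int_\epsilon^c \frac{s(c)}{s(y)}\,\frac{dy}{y}.
\]
Since $\int_\epsilon^c \frac{dy}{y}=\ln(c/\epsilon)$, this exhibits $r_\alpha(\epsilon,c)$ as the average of the ratio $s(c)/s(y)$ against the normalized positive weight $\tfrac{dy}{y}/\ln(c/\epsilon)$ on $[\epsilon,c]$, with the integrand equal to $1$ precisely at the endpoint $y=c$.

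With the identity in hand, the ordering follows directly from the shape-to-monotonicity correspondence. For $\alpha_{cave}\in\mathcal{K}_{cave}$, Lemma~\ref{Lem monotonicity of s} gives that $s$ is strictly decreasing, so $s(y)>s(c)$ and hence $s(c)/s(y)<1$ for every $y\in[\epsilon,c)$; averaging an integrand that is strictly below $1$ against the strictly positive weight yields $r_{\alpha_{cave}}(\epsilon,c)<1$. For $\alpha_{vex}\in\mathcal{K}_{vex}$, $s$ is strictly increasing, so $s(c)/s(y)>1$ on $[\epsilon,c)$ and the same averaging argument gives $r_{\alpha_{vex}}(\epsilon,c)>1$. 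The linear baseline has $s\equiv\sigma$ constant, so the integrand is identically $1$ and $r_{\alpha_l}(\epsilon,c)=1$, as already noted after \eqref{eq: relaxation ratio}. Chaining these three gives the claimed strict ordering.

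There is essentially no deep obstacle here: once $r_\alpha$ is rewritten as a weighted mean of $s(c)/s(y)$, the result is a direct consequence of the monotonicity of $s$. The only point requiring minor care is the \emph{strictness} of the outer inequalities. This is secured because, for a strictly concave (resp.\ convex) $\alpha$, the monotonicity of $s$ is strict, so $s(c)/s(y)$ differs from $1$ on the whole interval $[\epsilon,c)$—a set of positive Lebesgue measure—rather than at isolated points; averaging a function that is strictly less (resp.\ greater) than $1$ almost everywhere against a strictly positive weight produces a strict inequality. No regularity beyond the class-$\mathcal K$ assumptions is needed, since continuity and strict positivity of $\alpha$ on $[\epsilon,c]$ already guarantee integrability of $1/\alpha$ and finiteness of $T_\alpha(\epsilon,c)$.
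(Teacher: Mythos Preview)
Your proof is correct and follows essentially the same route as the paper's: both hinge on comparing $\alpha(y)$ to the chord $\alpha(c)\,y/c$ inside the crossing-time integral, which is exactly the statement $s(y)\gtrless s(c)$ from Lemma~\ref{Lem monotonicity of s}. Your weighted-average identity for $r_\alpha$ is a clean repackaging, but the underlying inequality, the integration step, and the strictness argument coincide with the paper's direct bound $T_{\alpha_{cave}}(\epsilon,c)<\frac{c}{\alpha_{cave}(c)}\ln(c/\epsilon)$.
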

\begin{proof}  Linear case is immediate. For $\alpha_{cave}\in\mathcal{K}_{cave}$, \[\alpha_{cave}(y)> {\alpha_{cave}(c)y}/{c} ~\text{holds}, ~\text{for all } y\in(0,c), ~\text{which yields}\] 
\[T_{\alpha_{cave}}(\epsilon,c)< \int_\epsilon^c \frac{c\,dy}{\alpha_{cave}(c)\,y}=\frac{c}{\alpha_{cave}(c)}\ln\left(\frac{c}{\epsilon}\right).\]
Substituting it into \eqref{eq: relaxation ratio}, one has $r_{\alpha_{cave}}< 1$. The strictly convex case reverses the
inequalities, giving $r_{\alpha_{vex}}>1$. 
\end{proof}

\subsubsection*{Endpoint relaxation and concave shaping}
The endpoint–relaxation ratio is \emph{intrinsic} to a comparison function on a given window and
is invariant under positive scaling: for any $k>0$, $r_{k\alpha}(\epsilon,c)=r_{\alpha}(\epsilon,c)$.
Hence we may take any strictly concave class-\(\mathcal K\) template \(\alpha_{{cave}}\) and
normalize/scale it to obtain a comparison function that simultaneously relaxes the endpoint and
accelerates decay (cf. Sec.~\ref{sec:construct}).

Let the linear baseline be \(\alpha_l(y)=\sigma y\) (so \(\alpha_l(c)=\sigma c\)), and write
\(\alpha_{{cave}}(y)=s_{{cave}}(y)\,y\) with strictly decreasing
\(s_{{cave}}\) (the concave factor). Define
\begin{equation}\label{eq:concave-normalized}
  \alpha(y)\;=\;\frac{s_{\mathrm{cave}}(y)}{s_{\mathrm{cave}}(c)}\,r\,\sigma\,y,
  \qquad r_{\alpha_{\mathrm{cave}}}(\epsilon,c)\;<\;r\;<\;1.
\end{equation}
Then \(\alpha(c)=r\,\sigma c\) (endpoint relaxation), and using
\(\sigma_{k\alpha}=k\,\sigma_{\alpha}\) together with \(r>r_{\alpha_{\mathrm{cave}}}(\epsilon,c)\)
yields $\sigma_{\alpha}(\epsilon,c)>\sigma$,
i.e., faster windowed decay with a reduced endpoint value. 

\begin{lem}[Concavity is necessary for relaxation]\label{lem:necessary-concave}
Let $\alpha\in\mathcal{K}$ be continuous on $[0,c]$ and $C^2$ on $(0,c)$.
If $r_\alpha(\epsilon,c)<1$ for some $0<\epsilon<c$, then there exists a nontrivial interval
$\mathcal{I}_{\mathrm{cave}}\subset(0,c)$ on which $\alpha$ is strictly concave.
\end{lem}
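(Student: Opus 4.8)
The plan is to convert the relaxation condition $r_\alpha(\epsilon,c)<1$ into a sign condition on an integral of the dynamic factor $s(y)=\alpha(y)/y$ from \eqref{de:dynamic factor}, and then argue by contraposition that a globally convex $\alpha$ cannot meet it. First I would expand the definition \eqref{eq: relaxation ratio} using \eqref{eq:sigma-alpha} and \eqref{eq:Talpha-def} to write
\[
r_\alpha(\epsilon,c)-1
=\frac{1}{c\ln(c/\epsilon)}\int_\epsilon^c\Big(\frac{\alpha(c)}{\alpha(y)}-\frac{c}{y}\Big)\,dy .
\]
Substituting $\alpha(y)=s(y)\,y$ and $\alpha(c)=s(c)\,c$ from \eqref{eq:alpha_quasilinear} collapses the integrand to $\tfrac{c}{y}\,\tfrac{s(c)-s(y)}{s(y)}$, so that (since $c,y,s(y)>0$)
\[
r_\alpha(\epsilon,c)<1
\quad\Longleftrightarrow\quad
\int_\epsilon^c\frac{s(c)-s(y)}{y\,s(y)}\,dy<0 .
\]
The value of this reformulation is that the right-hand side can be negative only if $s(y)>s(c)$ on a set of positive measure, i.e. the dynamic factor must exceed its endpoint value somewhere in the window.

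Next I would take the contrapositive. Suppose $\alpha$ has no nontrivial subinterval of $(0,c)$ on which it is strictly concave. Because $\alpha\in C^2(0,c)$ with $\alpha''$ continuous, this is equivalent to $\alpha''(y)\ge 0$ for every $y\in(0,c)$: were $\alpha''(y_0)<0$ at a single point, continuity would force $\alpha''<0$ on a neighborhood and hence a strictly concave interval. To link this curvature sign to the monotonicity of $s$, I would introduce the auxiliary function $h(y)\doteq y\,\alpha'(y)-\alpha(y)$ and compute $h'(y)=y\,\alpha''(y)$ together with $s'(y)=h(y)/y^2$. Under $\alpha''\ge 0$ one gets $h'\ge 0$, so $h$ is non-decreasing on $(0,c)$; this is the local counterpart of the dynamic-factor characterization (Lemma~\ref{Lem monotonicity of s}).

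The remaining—and most delicate—step is the boundary value $h(0^+)$. I would argue that convexity makes $\alpha'$ non-decreasing, hence $\alpha'(0^+)=\inf_{y>0}\alpha'(y)$ is finite, so $y\,\alpha'(y)\to 0$; together with $\alpha(0)=0$ this yields $h(0^+)=0$. Since $h$ is non-decreasing and starts at $0$, we have $h\ge 0$ on $(0,c)$, whence $s'\ge 0$ and $s$ is non-decreasing. Then $s(y)\le s(c)$ throughout $[\epsilon,c]$ makes the integrand nonnegative, so $\int_\epsilon^c\frac{s(c)-s(y)}{y\,s(y)}\,dy\ge 0$, i.e. $r_\alpha(\epsilon,c)\ge 1$, contradicting the hypothesis. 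I expect the main obstacle to be precisely this boundary analysis: justifying $h(0^+)=0$ requires finiteness of $\alpha'(0^+)$, which is not automatic for a general class-$\mathcal K$ function but is supplied here by the monotonicity of $\alpha'$ under the standing convexity assumption; the continuity-of-$\alpha''$ equivalence used in the contrapositive is the second point to state with care.
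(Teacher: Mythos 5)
Your proof is correct, but it takes a genuinely different route from the paper's. The paper argues directly with the chord $\chi(y)=\tfrac{\alpha(c)}{c}y$: from $r_\alpha(\epsilon,c)<1$ it deduces $\int_\epsilon^c\bigl(\tfrac{1}{\alpha(y)}-\tfrac{1}{\chi(y)}\bigr)\,dy<0$, hence $\alpha>\chi$ on a set of positive measure; setting $h=\alpha-\chi$ (so $h(0)=h(c)=0$ and $h''=\alpha''$), it observes that $h''\ge0$ throughout $(0,c)$ would make $h$ convex and thus $h\le0$ on $(0,c)$, a contradiction, so $\alpha''(y_0)<0$ at some point and continuity of $\alpha''$ supplies the nontrivial interval. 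You instead pass to the dynamic factor $s(y)=\alpha(y)/y$, derive the identity $r_\alpha(\epsilon,c)-1=\tfrac{1}{c\ln(c/\epsilon)}\int_\epsilon^c\tfrac{c}{y}\,\tfrac{s(c)-s(y)}{s(y)}\,dy$ (your algebra checks out), and run the contrapositive: global $\alpha''\ge0$ forces $s$ non-decreasing, making the integrand nonnegative. Both proofs share the same final pivot (a single point of negative curvature plus continuity of $\alpha''$ yields the interval, and your equivalence between ``no strictly concave subinterval'' and $\alpha''\ge0$ is sound), but the paper's chord argument is shorter because the two boundary zeros $h(0)=h(c)=0$ do all the work and no analysis at $y=0^+$ is needed; your version buys an explicit quantitative identity tying the relaxation ratio to the excess of $s$ over its endpoint value, a non-strict companion to Lemma~\ref{Lem monotonicity of s}. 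One small repair: finiteness of $\alpha'(0^+)$ is not supplied by monotonicity of $\alpha'$ alone (the infimum of a non-decreasing function can be $-\infty$); you also need $\alpha'\ge0$, which holds because $\alpha\in\mathcal{K}$ is increasing. Alternatively, you could bypass the $h(0^+)$ boundary step entirely: for convex $\alpha$ with $\alpha(0)=0$, $s(y)=\tfrac{\alpha(y)-\alpha(0)}{y-0}$ is the chord slope from the origin and is non-decreasing by the standard three-point slope inequality, which collapses your middle and final steps into one line.
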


\begin{proof}
Let $\chi(y)\doteq\frac{\alpha(c)}{c}\,y$ be the chord through $(0,0)$ and $(c,\alpha(c))$.
The condition $r_\alpha(\epsilon,c)<1$ is equivalent to
\[
T_\alpha(\epsilon,c)=\int_{\epsilon}^{c}\frac{1}{\alpha(y)}dy
<
\int_{\epsilon}^{c}\frac{1}{\chi(y)}dy
=\frac{c}{\alpha(c)}\ln\frac{c}{\epsilon}.
\]
Hence 
\(
\int_{\epsilon}^{c}\big(\tfrac{1}{\alpha(y)}-\tfrac{1}{\chi(y)}\big)dy<0,
\)
which implies that
\(
\tfrac{1}{\alpha(y)}-\tfrac{1}{\chi(y)}<0
\)
on a set of positive measure in $(\epsilon,c)$. Since $\alpha,\chi>0$ on $(\epsilon,c)$, this yields
$\alpha(y)>\chi(y)$ on such a set. Define $h(y):=\alpha(y)-\chi(y)$. Then $h$ is continuous on $[0,c]$, $C^2$ on $(0,c)$, and satisfies
$h(0)=h(c)=0$ while $h>0$ on a set of positive measure in $(\epsilon,c)$. 
If $h''(y)\ge 0$ for all $y\in(0,c)$, then $h$ would be convex, and with $h(0)=h(c)=0$ convexity would imply
$h(y)\le 0$ for all $y\in(0,c)$ contradicting the condition. Therefore there exists $y_0\in(0,c)$ with
$h''(y_0)<0$. By continuity of $h''$, there is a nontrivial open interval 
$\mathcal{I}_{\mathrm{cave}}\ni y_0$ on which $h''<0$, i.e., $\alpha''<0$. Thus $\alpha$ is strictly concave
on $\mathcal{I}_{\mathrm{cave}}$.
\end{proof}

These results show that strictly concave comparison functions ($\mathcal{K}_{\mathrm{cave}}$)
can \emph{reduce the endpoint value} while still certifying \emph{faster windowed decay}.
The reduced endpoint directly lowers the required actuation level (e.g., peak input $\|u\|_\infty$),
as analyzed in the next section. Moreover, for a mixed-shape comparison
$\alpha\in\mathcal{K}_{\mathrm{mix}}$, achieving endpoint relaxation on a window necessarily
requires \emph{strict concavity on a nontrivial subinterval} of $(0,c)$
(cf.\ Lemma~\ref{lem:necessary-concave}).

\section{Lyapunov Decay in the CLF Framework}\label{sec:exact-CLF}
This section links the upper–bound decay analysis in Sec.\ref{sec:concave}
to the feasibility of the CLF derivative inequality \eqref{eq:V-ineq} under input bounds. We show that the concave comparison functions reduce the lower-bound actuation level while guaranteeing faster decay.

\subsection{Required Actuation Level for a Desired Decay}\label{subsec:theta-demand}
We characterize the trade–off between a target decay constraint $(\dot V(x,u)\leq-\alpha(V(x)))$ and the required actuation level. Throughout, we make the following regular assumption.
\begin{Ass}[Regularity]\label{ass:regularity}
There exist $L_1,L_2>0$, and $\{\bar g_i\}_{i=1}^m$ such that, for all $x\in\mathbb{X}$, $\|f(x)\|\leq L_1\|x\|$, and
\[
  \|\nabla V(x)\|\ \le\ L_2 \|x\|, 
  \qquad 
  \|g_i(x)\|\ \le\ \bar g_i,\ \ i=1,\dots,m,
\]
where $g(x)=[g_1(x),\dots,g_m(x)]$.  
\end{Ass}

\begin{defi}[Actuation level on a sublevel set]
For a given $\alpha\in\mathcal{K}$ and $v>0$, the required actuation level (minimal peak actuation $\|u\|_{\infty}$) to enforce
$\dot{V}(x,u)\le-\alpha(V)$ uniformly on $\Omega(V,v)$ is defined as:
\begin{equation}\label{eq:theta-min-def}
  \theta_{\min}(\alpha;v)
  \doteq
  \inf_{\theta\ge0}\Big\{\theta: \forall x\in\Omega(V,v),~\alpha\big(V(x)\big)\le D_{\max}(x,\theta)\Big\}.
\end{equation}
\end{defi}

Using $D_{\max}(x,\theta)=-L_fV(x)+\theta\,\|L_gV(x)\|_1$, \eqref{eq:theta-min-def} admits the closed form
\begin{equation}\label{eq:theta-min-closed}
  \theta_{\min}(\alpha;v)
  \;=\;
  \sup_{x\in\Omega(V,v)}
  \left[
    \frac{\alpha\big(V(x)\big)+L_fV(x)}{\ \|L_gV(x)\|_1\ }
  \right]_+
\end{equation}
where $[z]_+ \doteq \max\{z,0\}$. When $\alpha(V(x))+L_fV(x)\leq0$ for all $x\in\Omega(V,v)$, then $\theta_{\min}(\alpha;v)=0$. 
If there exists $x\in\Omega(V,v)$ with $x\ne0$, $\|L_gV(x)\|_1=0$ and $\alpha(V(x))+L_fV(x)>0$, then $\theta_{\min}(\alpha;v)=\infty$. 
\subsubsection{Small-$x$ behavior and finiteness}
We quantify when the actuation level $\theta_{\min}(\alpha;v)$ remains finite (or vanishes) as $v\to0$.
Near the origin assume, $1\le q_1\le 2$,
\[
  L_fV(x)=\mathcal{O}(\|x\|^2),\quad 
  \alpha(V(x))=\mathcal{O}(\|x\|^{q_1}),
\]
and impose a \emph{lower} growth bound on the input channel:
\begin{equation}\label{eq:lg-lower}
 \forall \|x\|<\rho, ~\|L_gV(x)\|_1 \ge c_g\|x\|^{q_2},
  \ c_g>0, \ q_2\ge 1.
\end{equation}

From \eqref{eq:theta-min-closed},
the numerator is $\mathcal{O}(\|x\|^{\min\{q_1,2\}})=\mathcal{O}(\|x\|^{q_1})$, and by \eqref{eq:lg-lower}
the denominator is $\mathcal{O}(\|x\|^{q_2})$. Therefore 
\[
  \left[\frac{\alpha(V(x))+L_fV(x)}{\|L_gV(x)\|_1}\right]_+
  \;=\; \mathcal{O}\!\left(\|x\|^{\,q_1-q_2}\right)_+,
\]
and we obtain the following situations as $v\to0$:
\begin{itemize}
  \item \textbf{Vanishing level} if $q_2<q_1$:  $q_1-q_2>0$ and
  $\lim_{v\to0}\theta_{\min}(\alpha;v)=0$.
  \item \textbf{Finite, nonzero limit (or bounded)} if $q_2=q_1$: the ratio remains bounded (approaches a constant
  determined by leading coefficients).
  \item \textbf{Blow-up} if $q_2>q_1$ and the numerator is positive near the origin
  (i.e., $\alpha(V(x))+L_fV(x)>0$ on a punctured neighborhood):
   $q_1-q_2<0$ and $\lim_{v\to0}\theta_{\min}(\alpha;v)=\infty$, e.g., scalar system $\dot{x}=x^qu$ with a CLF $V=\frac{1}{2}x^2$, if $q>1$, any linear comparison leads to infinite actuation level near the origin.
  If the numerator becomes nonpositive sufficiently close to the origin, the positive part yields $0$ instead.
\end{itemize}

\noindent\textbf{Examples.}
(i) \emph{Linear comparison} $\alpha(V)=\sigma V$ gives $q_1=2$.
If $\|L_gV(x)\|_1\ge c_g\|x\|$ ($q_2=1$, e.g., constant $g$ with full actuation and quadratic $V=x^\top Px$), then
$\theta_{\min}(\alpha;v)\to 0$ as $v\to0$.
\noindent
(ii) \emph{Root-type} $\alpha(V)=C V^{p}$ with $p\in(0,1)$ has $q_1=2p$.
If $q_2=1$, finiteness near the origin requires $2p\ge 1$ (i.e., $p\ge \tfrac12$); for $p<\tfrac12$ the level
blows up, reflecting the stiffness of non-Lipschitz rates at the origin. 

\begin{prop}[Scaling law: $\theta_{\min}$ vs.\ $\xi\alpha$]\label{prop:theta-scaling-law}
Consider $v>0$ and $\alpha\in\mathcal{K}$, and assume $\theta_{\min}(\alpha;v)<\infty$. For $\xi\ge0$, define
\[
  \theta_{\min}(\xi\alpha;v)
  \;=\;
  \sup_{x\in\Omega(V,v)}
  \left[
    \frac{\xi\,\alpha(V(x)) + L_fV(x)}{\|L_gV(x)\|_1}
  \right]_+ .
\]
Then for any $0\le\xi_1<\xi_2<\infty$,
\[
  \theta_{\min}(\xi_1\alpha;v)\ \le\ \theta_{\min}(\xi_2\alpha;v).
\]
Moreover, if $\theta_{\min}(\xi_1\alpha;v)>0$, then
\[
  \theta_{\min}(\xi_2\alpha;v)\;>\;\theta_{\min}(\xi_1\alpha;v).
\]
Equivalently, if $\theta_{\min}(\alpha;v)>0$, scaling down ($0<\xi<1$) strictly decreases the level, while scaling up ($\xi>1$) strictly increases it.
\end{prop}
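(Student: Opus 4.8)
The plan is to treat $\theta_{\min}(\xi\alpha;v)$ as the supremum over the compact set $\Omega(V,v)$ of the bracketed ratio
\[
  g_\xi(x)\doteq\left[\frac{\xi\,\alpha(V(x))+L_fV(x)}{\|L_gV(x)\|_1}\right]_+,
\]
and to first establish \emph{pointwise} monotonicity in $\xi$, then pass to the supremum. The single structural fact driving everything is that $\alpha\in\mathcal{K}$ together with $V\ge0$ forces $\alpha(V(x))\ge0$ for every $x$, with strict positivity exactly when $x\ne0$. Fixing $x$ with $\|L_gV(x)\|_1>0$, increasing $\xi$ raises the numerator by the nonnegative amount $(\xi_2-\xi_1)\alpha(V(x))$ while the positive denominator is unchanged, and $[\cdot]_+$ is nondecreasing; at degenerate points $\|L_gV(x)\|_1=0$ the value is either $0$ or $+\infty$ and is again nondecreasing in $\xi$. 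Hence $\xi\mapsto g_\xi(x)$ is nondecreasing for each $x$, and taking the supremum gives $\theta_{\min}(\xi_1\alpha;v)\le\theta_{\min}(\xi_2\alpha;v)$, which is the weak claim.

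For the strict claim I assume $\theta_{\min}(\xi_1\alpha;v)=M_1>0$ and aim to produce a maximizer $x^\star\in\Omega(V,v)$ of $g_{\xi_1}$ and then exploit the explicit gap. Since $V,L_fV,\|L_gV\|_1$ are continuous and $\Omega(V,v)$ is compact, on the open set where $\|L_gV\|_1>0$ the map $g_{\xi_1}$ is continuous, so a maximizing sequence admits a convergent subsequence $x_k\to x^\star$. At a maximizer attaining the positive value $M_1$ the positive part is active, which forces $\|L_gV(x^\star)\|_1>0$; moreover $x^\star\ne0$, because at the origin $f(0)=0$ and $\nabla V(0)=0$ make the numerator vanish, so $g_{\xi_1}(0)=0\ne M_1$. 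Therefore $V(x^\star)>0$, hence $\alpha(V(x^\star))>0$, and
\[
  g_{\xi_2}(x^\star)-g_{\xi_1}(x^\star)=\frac{(\xi_2-\xi_1)\,\alpha(V(x^\star))}{\|L_gV(x^\star)\|_1}>0,
\]
whence $\theta_{\min}(\xi_2\alpha;v)\ge g_{\xi_2}(x^\star)>M_1=\theta_{\min}(\xi_1\alpha;v)$.

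The step I expect to be the main obstacle is the \emph{attainment} of the supremum, because $g_{\xi_1}$ can be discontinuous on the degenerate set $\{\|L_gV\|_1=0\}$ and the supremum could a priori only be approached along a sequence $x_k\to x^\star$ with $\|L_gV(x^\star)\|_1=0$. Here I would invoke the standing hypothesis that $M_1$ is finite: boundedness of $g_{\xi_1}$ by $M_1$ forces the numerator $\xi_1\alpha(V(x))+L_fV(x)\le M_1\|L_gV(x)\|_1$, so by continuity $\xi_1\alpha(V(x^\star))+L_fV(x^\star)\le0$. If $V(x^\star)>0$, then for $\xi_2>\xi_1$ the limiting numerator $(\xi_2-\xi_1)\alpha(V(x^\star))$ is strictly positive while $\|L_gV(x_k)\|_1\to0$, so $g_{\xi_2}(x_k)\to+\infty$ and $\theta_{\min}(\xi_2\alpha;v)=+\infty>M_1$; if instead $x^\star=0$, the value there is $0$, contradicting that the supremum equals $M_1>0$, so the sup cannot be approached solely at the origin. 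Thus strictness holds in every case. Finally, the stated equivalence follows by specialization: scaling up is $\xi_1=1<\xi_2=\xi$ with $\theta_{\min}(\alpha;v)>0$, and scaling down is $\xi_1=\xi<\xi_2=1$, where either $\theta_{\min}(\xi\alpha;v)>0$ and the strict result applies, or $\theta_{\min}(\xi\alpha;v)=0<\theta_{\min}(\alpha;v)$ yields the strict drop directly.
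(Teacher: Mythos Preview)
Your proof follows the same strategy as the paper's: for each fixed $x$ the bracketed ratio is the positive part of an affine function of $\xi$ with nonnegative slope $\alpha(V(x))/\|L_gV(x)\|_1$ (and is identically $0$ when $\|L_gV(x)\|_1=0$, by finiteness at $\xi=1$), so the pointwise supremum is nondecreasing in $\xi$; strictness then comes from a witness $x^\star$ at which the positive part is active, forcing $\|L_gV(x^\star)\|_1>0$ and $\alpha(V(x^\star))>0$. You are actually more careful than the paper about \emph{attainment}: the paper simply picks ``some $x^\star$ with $\psi_{x^\star}(\xi_1)>0$'' and asserts the strict conclusion, which tacitly treats $x^\star$ as a maximizer; your maximizing-sequence analysis is what fills that gap.

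Two small points in your degenerate-limit branch need tightening. First, from ``bounded by $M_1$'' you only obtain $\xi_1\alpha(V(x^\star))+L_fV(x^\star)\le 0$; to conclude that the $\xi_2$-numerator has a strictly positive limit (so that $g_{\xi_2}(x_k)\to+\infty$) you also need $\ge 0$, which follows because along the maximizing sequence the $\xi_1$-numerator is eventually strictly positive and hence has nonnegative continuous limit---so in fact equality holds and the limiting $\xi_2$-numerator is exactly $(\xi_2-\xi_1)\alpha(V(x^\star))$. Second, your dismissal of the case $x^\star=0$ (``the value there is $0$, contradicting that the supremum equals $M_1$'') does not by itself rule out a sequence $x_k\to 0$ with $g_{\xi_1}(x_k)\to M_1$, since $g_{\xi_1}$ is not obviously upper semicontinuous at the origin. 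Under Assumption~\ref{ass:regularity} (in force here) one has $L_fV(x)=O(\|x\|^2)$, $\alpha(V(x))=O(\|x\|^2)$ for Lipschitz $\alpha$, and $\|L_gV(x)\|_1=O(\|x\|)$, so $g_{\xi_1}(x)=O(\|x\|)\to 0$ and this branch is vacuous---but that is the argument you should invoke.
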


\begin{proof}
For each $x\in\Omega(V,v)$, let
\[
  \psi_x(\xi)\;\doteq\;\left[\frac{\xi\,\alpha(V(x))+L_fV(x)}{\|L_gV(x)\|_1}\right]_+ .
\]
If $\|L_gV(x)\|_1>0$, then $\psi_x(\xi)$ is the positive part of an affine function in $\xi$ with slope
$\alpha(V(x))/\|L_gV(x)\|_1\ge0$, hence it is nondecreasing (indeed convex). If $\|L_gV(x)\|_1=0$, finiteness of $\theta_{\min}(\alpha;v)$ forces the numerator $\le0$, so $\psi_x\equiv0$, which is also nondecreasing. Thus $\theta_{\min}(\xi\alpha;v)=\sup_{x\in\Omega(V,v)}\psi_x(\xi)$
is nondecreasing in $\xi$, proving $\theta_{\min}(\xi_1\alpha;v)\le\theta_{\min}(\xi_2\alpha;v)$ for $\xi_2>\xi_1$. If $\theta_{\min}(\xi_1\alpha;v)>0$, there exists $x^\star$ with $\psi_{x^\star}(\xi_1)>0$. Then necessarily
$\alpha(V(x^\star))>0$ and $\|L_gV(x^\star)\|_1>0$, so $\psi_{x^\star}$ has strictly positive slope at $\xi_1$.
Hence $\psi_{x^\star}(\xi_2)>\psi_{x^\star}(\xi_1)$ for any $\xi_2>\xi_1$,  yielding
$\theta_{\min}(\xi_2\alpha;v)>\theta_{\min}(\xi_1\alpha;v)$.
\end{proof}

Therefore, under the linear comparison $\alpha(V)=\sigma V$, the required actuation level is nondecreasing in $\sigma$. In fact, if $\theta_{\min}(\sigma_1 V;c)>0$ and $\sigma_2>\sigma_1$, then
\[
\theta_{\min}(\sigma_2 V;c)\;>\;\theta_{\min}(\sigma_1 V;c),
\]
so achieving a larger certified exponential rate necessarily requires a greater actuation level (larger $\|u\|_{\infty}$).

\subsubsection{Endpoint–dominated lower bounds (level-wise growth)}
To capture how the required input grows with the Lyapunov level, note that for $x\in\Omega(V,v)$, by Assumption. \ref{ass:regularity}, \[\|f(x)\|\le L_1\|x\|,\quad\|\nabla V(x)\|\le L_2\|x\|,\quad\|g_i(x)\|\le\bar g_i,\]
 and the CLF bounds $k_1\|x\|^2\le V(x)$, we have
\[
  L_fV(x)\leq L_1L_2\|x\|^2 \ \le \frac{L_1L_2}{k_1}\,V \;\doteq\; k_3\,V, \quad\text{and}
\]
\begin{align*}
    &\|L_gV(x)\|_1=\sum_i|\nabla V^\top g_i| \le \|\nabla V\|\sum_i\|g_i\|
  \\ &\quad\le L_2\left(\sum_{i=1}^m \bar g_i\right)\|x\|
  \le\ \frac{L_2}{\sqrt{k_1}}\!\left(\sum_{i=1}^m \bar g_i\right)\!\sqrt{V}
  \;\doteq\; k_4\,\sqrt{V}.
\end{align*} 
Therefore, we obtain a level-wise lower bound:
\begin{align}\label{eq: theta ubar}
     \theta_{\min}(\alpha;v)
   \ge
  \sup_{0<V\le v}
  \left[\frac{\alpha(V)-k_3 V}{k_4\sqrt{V}}\right]_+ \doteq\ubar{\theta}_{\min}(\alpha;v).
\end{align}
Using $\theta\geq\theta_{\min}({\alpha;v})$, \eqref{eq: theta ubar} gives the necessary level-wise condition (cf. \eqref{eq:level-wise condition})
\begin{align}
    \alpha(V)\leq\bar{\alpha}(\theta,V)\leq k_3V+\theta k_4\sqrt{V}
\end{align}

\begin{prop}[Endpoint-dominated lower bound]\label{prop:theta-lb-endpoint}
For $\alpha\in\mathcal{K}_l$, $\ubar{\theta}_{\min}(\alpha;v)$ is endpoint-dominated that
\begin{align}\label{eq: endpoint dominated}
  0<V\leq v,\quad \ubar{\theta}_{\min}(\alpha;v) = \left[\frac{\alpha(v)-k_3 v}{k_4\sqrt{v}}\right]_+.  
\end{align}
\end{prop}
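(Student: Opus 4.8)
The plan is to exploit the linearity of $\alpha$ to collapse the level-wise profile inside the supremum in \eqref{eq: theta ubar} to a single monotone power of $\sqrt V$, so that the supremum over $(0,v]$ is automatically attained at the right endpoint $V=v$. First I would substitute $\alpha(V)=\sigma V$ (with $\sigma>0$, since $\alpha\in\mathcal{K}_l$) into the argument and simplify
\[
  \frac{\alpha(V)-k_3 V}{k_4\sqrt V}
  \;=\;\frac{(\sigma-k_3)\,V}{k_4\sqrt V}
  \;=\;\frac{\sigma-k_3}{k_4}\,\sqrt V,
\]
so that the entire $V$-dependence reduces to $\sqrt V$ scaled by the constant sign factor $(\sigma-k_3)/k_4$ (recall $k_4>0$).

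Next I would establish monotonicity of the positive part $g(V)\doteq\bigl[(\sigma-k_3)\sqrt V/k_4\bigr]_+$ on $(0,v]$ by a case split on the sign of $\sigma-k_3$. If $\sigma>k_3$, the factor is positive and $\sqrt V$ is strictly increasing, so $g$ is strictly increasing and its supremum over $(0,v]$ is attained at $V=v$, equal to $(\sigma-k_3)\sqrt v/k_4=\bigl[(\alpha(v)-k_3 v)/(k_4\sqrt v)\bigr]_+>0$. If $\sigma\le k_3$, the argument is nonpositive for every $V>0$, hence $g\equiv 0$ on $(0,v]$ and the supremum is $0$; the right-hand side of \eqref{eq: endpoint dominated} likewise evaluates to $\bigl[(\sigma-k_3)\sqrt v/k_4\bigr]_+=0$, so the two sides agree. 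In both cases the supremum coincides with the endpoint value, which is exactly \eqref{eq: endpoint dominated}.

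The computation is elementary; the only points that warrant a little care are the behavior as $V\to0^+$ (the profile tends to $0$, and since the interval is open on the left no supremum is lost there) and the sign-based case split, which is precisely what makes the $[\cdot]_+$ collapse consistent with the endpoint formula in the degenerate regime $\sigma\le k_3$. I do not anticipate a genuine obstacle: the essential content is that \emph{linearity} forces the level-wise lower-bound profile to be monotone in $V$, so that $\ubar{\theta}_{\min}(\alpha;v)$ is governed solely by the endpoint $V=v$ — in contrast to strictly concave $\alpha$, where the interior of $(0,v)$ can dominate and break this endpoint-reduction.
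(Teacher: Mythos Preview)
Your proposal is correct and follows essentially the same approach as the paper: substitute $\alpha(V)=\sigma V$, reduce the profile to $(\sigma-k_3)\sqrt V/k_4$, and split on the sign of $\sigma-k_3$ to conclude that the supremum over $(0,v]$ is attained at the endpoint (or is identically zero). The paper's proof is terser but structurally identical; your extra remarks on the $V\to0^+$ limit and the consistency of $[\cdot]_+$ in the degenerate case $\sigma\le k_3$ are fine elaborations but add nothing new.
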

\begin{proof}
    Write $\alpha(V)=\sigma V$. If $\sigma\leq k_3$, then $\ubar{\theta}_{\min}(\alpha;v)\equiv0$ such that \eqref{eq: endpoint dominated} holds. If $\sigma>k_3$, for $V\in(0,v]$, function
    \begin{align*}
        \frac{\alpha(V)-k_3 V}{k_4\sqrt{V}}=  \frac{\sigma -k_3}{k_4}\sqrt{V}
    \end{align*}
    is strictly increasing such that the maximum value is obtained at $V=v$. 
\end{proof}

Both Propositions~\ref{prop:theta-scaling-law} and \ref{prop:theta-lb-endpoint} show that, for the linear comparison, lowering the endpoint value $\alpha(c)$ is necessary to reduce the  actuation level
$\theta_{\min}$ on $x\in\Omega(V,c)$ ( to lower $\|u\|_\infty$).

\begin{prop}[Actuation savings via endpoint relaxation]\label{prop:actuation-savings}
Consider a window $[\epsilon,c]$ and target rate $\sigma>k_3$. Let the linear baseline be $\alpha_l(V)=\sigma V$ and 
$\alpha_d(V)=s(V)\sigma V\in\mathcal{K}$ satisfy $\sigma_{\alpha_d}(\epsilon,c)>\sigma$.

\noindent\emph{(i) Necessity of endpoint relaxation and concavity:}
\[\ubar{\theta}_{\min}(\alpha_d;c)<\ubar{\theta}_{\min}(\alpha_l;c)\quad\Longrightarrow \quad s(c)<1 ~(\alpha_d(c)<\alpha_l(c)).\]
Moreover, $\alpha_d$ must be \emph{strictly concave} on a nontrivial subinterval $\mathcal{I}_{cave}\subset(0,c)$.

\smallskip
\noindent\emph{(ii) No savings with linear or convex shapes.}
If $\sigma_{\alpha_d}(\epsilon,c)>\sigma$, \[\forall\alpha_d\in\{\mathcal{K}_l,\mathcal{K}_{{vex}}\}, \qquad\ubar{\theta}_{\min}(\alpha_d;c)>\ubar{\theta}_{\min}(\alpha_l;c).\]
\end{prop}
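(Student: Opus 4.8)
The plan is to reduce everything to the single endpoint term $\frac{\alpha(c)-k_3 c}{k_4\sqrt{c}}$, exploiting that $\ubar{\theta}_{\min}(\alpha;c)$ is a supremum over $0<V\le c$ and therefore always dominates its value at $V=c$, i.e. $\ubar{\theta}_{\min}(\alpha;c)\ge\big[\tfrac{\alpha(c)-k_3 c}{k_4\sqrt{c}}\big]_+$. For the linear baseline, Proposition~\ref{prop:theta-lb-endpoint} pins this supremum exactly at $V=c$, giving $\ubar{\theta}_{\min}(\alpha_l;c)=\tfrac{(\sigma-k_3)\sqrt{c}}{k_4}>0$ because $\sigma>k_3$. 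Keeping this value strictly positive (via $\sigma>k_3$) is what makes all subsequent comparisons strict.

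For part (i), I would start from the savings hypothesis $\ubar{\theta}_{\min}(\alpha_d;c)<\ubar{\theta}_{\min}(\alpha_l;c)$ and insert the endpoint lower bound for $\alpha_d$, yielding the chain $\big[\tfrac{\alpha_d(c)-k_3 c}{k_4\sqrt{c}}\big]_+\le\ubar{\theta}_{\min}(\alpha_d;c)<\tfrac{(\sigma-k_3)\sqrt{c}}{k_4}$. Checking the two cases of the positive part (active or inactive) forces $\alpha_d(c)<\sigma c$, i.e.\ $s(c)<1$, which is the claimed endpoint relaxation. Then I would compute the endpoint-relaxation ratio $r_{\alpha_d}(\epsilon,c)=\tfrac{\alpha_d(c)}{\sigma_{\alpha_d}(\epsilon,c)\,c}=\tfrac{s(c)\sigma}{\sigma_{\alpha_d}(\epsilon,c)}$; combining $s(c)<1$ with the hypothesis $\sigma_{\alpha_d}(\epsilon,c)>\sigma$ gives $r_{\alpha_d}(\epsilon,c)<1$, whereupon Lemma~\ref{lem:necessary-concave} supplies a nontrivial subinterval $\mathcal{I}_{cave}\subset(0,c)$ of strict concavity.

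For part (ii), I would split into the linear and convex cases but reuse the same endpoint-term mechanism. In the linear case $\alpha_d(V)=\sigma'V$ with $\sigma'=\sigma_{\alpha_d}(\epsilon,c)>\sigma$, Proposition~\ref{prop:theta-lb-endpoint} again gives the exact value $\tfrac{(\sigma'-k_3)\sqrt{c}}{k_4}$, which strictly exceeds $\ubar{\theta}_{\min}(\alpha_l;c)$ since $\sigma'>\sigma>k_3$. In the convex case I would invoke Proposition~\ref{prop:RR}: $r_{\alpha_{vex}}(\epsilon,c)>1$ means $\alpha_d(c)>\sigma_{\alpha_d}(\epsilon,c)\,c>\sigma c$, so the endpoint term of $\alpha_d$ is positive and strictly larger than $\tfrac{(\sigma-k_3)\sqrt{c}}{k_4}$; since $\ubar{\theta}_{\min}(\alpha_d;c)$ dominates its endpoint term, the strict inequality $\ubar{\theta}_{\min}(\alpha_d;c)>\ubar{\theta}_{\min}(\alpha_l;c)$ follows.

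The one point needing care—more a subtlety than a genuine obstacle—is that for a general (concave) $\alpha_d$ the supremum defining $\ubar{\theta}_{\min}(\alpha_d;c)$ need not be attained at $V=c$, so only the one-sided bound $\ubar{\theta}_{\min}(\alpha_d;c)\ge\big[\tfrac{\alpha_d(c)-k_3 c}{k_4\sqrt{c}}\big]_+$ is available rather than the exact endpoint-domination enjoyed by the linear comparison. This is harmless in both directions: in part (i) the implication runs from savings to $s(c)<1$, so the endpoint term—sandwiched below $\ubar{\theta}_{\min}(\alpha_d;c)$—inherits the savings bound; and in part (ii) the endpoint term already overshoots the linear value and lies below $\ubar{\theta}_{\min}(\alpha_d;c)$. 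Thus the one-sided bound suffices throughout, and no interior maximizer of $\ubar{\theta}_{\min}(\alpha_d;c)$ needs to be located.
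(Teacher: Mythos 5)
Your proposal is correct and takes essentially the same route as the paper's proof: lower-bound $\ubar{\theta}_{\min}(\alpha_d;c)$ by its endpoint term, use the endpoint domination of the linear baseline (Proposition~\ref{prop:theta-lb-endpoint}) to force $s(c)<1$, deduce $r_{\alpha_d}(\epsilon,c)<1$, and invoke Lemma~\ref{lem:necessary-concave}, with part~(ii) split into the same linear/convex cases. The only cosmetic differences are that you obtain $\alpha_d(c)>\sigma c$ in the convex case from Proposition~\ref{prop:RR} where the paper cites Proposition~\ref{prop:ordering}, and you make explicit the positive-part case analysis and the step $r_{\alpha_d}(\epsilon,c)<1$ that the paper leaves implicit.
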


\begin{proof}
\emph{(i)} Let $\varphi(\alpha;v)\doteq([\alpha(v)-k_3v]_+)/(k_4\sqrt{v})$. As defined in \eqref{eq: theta ubar}, $\underline{\theta}_{\min}(\alpha;c)\geq \varphi(\alpha;c)$. Since $\ubar{\theta}_{\min}(\alpha_l;c)$ is endpoint-dominated, then $\ubar{\theta}_{\min}(\alpha_d;c)<\varphi(\alpha_l;c)$ such that $\varphi(\alpha_d;c)<\varphi(\alpha_l;c)$ which implies $\alpha_d(c)<\alpha_{l}(c)$, i.e., $s(c)<1$. Due to $\sigma_{\alpha_d}(\epsilon,c)>\sigma$ but $\alpha_d(c)<\sigma c$, then $\alpha_d$ must lie strictly above its chord somewhere on
$(\epsilon,c)$ by Lemma.\ref{lem:necessary-concave}; hence it is strictly concave on a nontrivial subset. \emph{(ii)} 
If $\alpha_d$ is linear with slope $\tilde\sigma>\sigma$, then
$\alpha_d(c)=\tilde\sigma c>\sigma c$ such that $\ubar{\theta}_{\min}(\alpha_d;c)>\ubar{\theta}_{\min}(\alpha_d;c)$ 
If $\alpha_d$ is strictly convex and $\sigma_{\alpha_d}(\epsilon,c)>\sigma$, then 
$\alpha_d(c)>\sigma c$ by Proposition. \ref{prop:ordering}, such that the actuation increases.
\end{proof}

Consequently, strict concavity is necessary to simultaneously lower $\theta_{\min}$  and increase the guaranteed nominal decay rate over the linear baseline. 

\subsection{Feasibility–Preserving Acceleration}\label{subsec:accel-feasible}
In this subsection, considering a fixed $\theta$, we show that if a linear baseline is feasible on $\mathbb{X}$, then any strict
margin on a sublevel set can be converted into a \emph{concave} comparison
function that (i) remains feasible on
$\mathbb{X}$, and (ii) strictly improves the nominal exponential rate on
the evaluation interval $[\epsilon,c]$ contained in that sublevel set.

\begin{thm}[Feasibility–preserving acceleration]\label{thm:FPA}
Let $V$ be a smooth CLF on $\mathbb{X}$. Suppose the linear baseline, $\forall x\in\mathbb{X}$,
\begin{subequations}
\begin{equation}\label{eq:baseline}
  \exists\,u\in\mathbb{U}(\theta),~  
  L_fV(x)+L_gV(x)u \le -\sigma V(x),
\end{equation}
holds for some $\sigma>0$. Assume there exist $c_0\in(0,c_{\max}]$ and
$\sigma_0>0$ such that for all $x\in\Omega(V,c_0)$,
\begin{equation}\label{eq:margin}
  \exists\,u\in\mathbb{U}(\theta),~ 
  L_fV(x)+L_gV(x)u \le -(\sigma+\sigma_0) V(x)
\end{equation}
Then there exists $\alpha_{cave}\in\mathcal{K}_{cave}$ such that $\forall x\in\mathbb{X}$,
\begin{align}
  \exists\,u\in\mathbb{U}(\theta),~ 
  L_fV(x)+L_gV(x)u \leq -\alpha_{cave}(V(x)),\label{eq:cave-feasible}
\end{align}
and for every $c\in(0,c_{\max}]$ there exists $\epsilon_0\in(0,\min\{c,c_0\})$
such that for all $\epsilon\in(0,\epsilon_0)$,
\begin{equation}\label{eq:rate_improve}
  \sigma_{\alpha_{cave}}(\epsilon,c)\ >\ \sigma.
\end{equation}
\end{subequations}
\end{thm}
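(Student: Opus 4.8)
The plan is to \emph{exhibit} a single strictly concave comparison function that spends the extra decay budget $\sigma_0$ exactly where it is guaranteed (on $\Omega(V,c_0)$) and relaxes to the baseline budget elsewhere. Writing $\alpha_{cave}(y)=s(y)\,y$ as in Definition~\ref{def:concave factor}, I want its dynamic factor $s$ to satisfy $\sigma<s(0^+)\le\sigma+\sigma_0$ and to cross the value $\sigma$ at a level $V^\star\le c_0$; by Lemma~\ref{Lem monotonicity of s} this monotone crossing is precisely the signature of strict concavity. The region $s>\sigma$ on $(0,V^\star)$ creates the interior over-decay that drives the windowed rate above $\sigma$, while $s\le\sigma$ for $V>c_0$ keeps the certified decay within the baseline budget outside the margin set.

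A convenient explicit choice is the rational concave factor
\begin{equation}\label{eq:FPA-construction}
  \alpha_{cave}(y)=\frac{A\,y}{1+B\,y},\qquad A\in(\sigma,\sigma+\sigma_0],\quad B\ge\frac{A-\sigma}{\sigma\,c_0},
\end{equation}
which lies in $\mathcal{K}_{cave}$ (strictly increasing with $\alpha_{cave}(0)=0$ and $\alpha_{cave}''<0$) and has strictly decreasing dynamic factor $s(y)=A/(1+By)$. The bound $A\le\sigma+\sigma_0$ caps $s$ by $\sigma+\sigma_0$, while the lower bound on $B$ places the crossing level $V^\star=(A-\sigma)/(\sigma B)$ inside the margin set, i.e.\ $V^\star\le c_0$. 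Such $A,B$ exist precisely because $\sigma_0>0$ and $c_0>0$.

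For feasibility I would split $\mathbb{X}$ into two regions. On $\Omega(V,c_0)$ the margin \eqref{eq:margin} gives $D_{\max}(x,\theta)\ge(\sigma+\sigma_0)V(x)$ (cf.\ \eqref{eq:Dmax_def}), and since $s$ is decreasing one has $\alpha_{cave}(V)\le A\,V\le(\sigma+\sigma_0)V$, so \eqref{eq:cave-feasible} is satisfiable there. On $\mathbb{X}\setminus\Omega(V,c_0)$, where $V(x)>c_0\ge V^\star$, monotonicity of $s$ gives $\alpha_{cave}(V)=s(V)V<\sigma V\le D_{\max}(x,\theta)$ by the baseline \eqref{eq:baseline}; hence \eqref{eq:cave-feasible} holds on all of $\mathbb{X}$. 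For the rate improvement, fix any $c\in(0,c_{\max}]$; by \eqref{eq:Talpha-def} the crossing time integrates in closed form,
\begin{equation}\label{eq:FPA-time}
  T_{\alpha_{cave}}(\epsilon,c)=\int_{\epsilon}^{c}\frac{1+B y}{A\,y}\,dy
  =\frac{1}{A}\ln\frac{c}{\epsilon}+\frac{B}{A}(c-\epsilon),
\end{equation}
so from \eqref{eq:sigma-alpha} and \eqref{eq:FPA-time}, $\sigma_{\alpha_{cave}}(\epsilon,c)\to A>\sigma$ as $\epsilon\to0^+$ (the logarithmic term dominates). Thus there is $\epsilon_0\in(0,\min\{c,c_0\})$ with $\sigma_{\alpha_{cave}}(\epsilon,c)>\sigma$ for all $\epsilon\in(0,\epsilon_0)$, which is \eqref{eq:rate_improve}; alternatively, when $c>V^\star$ one has $\alpha_{cave}(c)<\sigma c$ and $\alpha_{cave}>\alpha_l$ on $(0,V^\star)$, so Theorem~\ref{thm:relaxed-endpoint-accel}(ii) applies directly.

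The main obstacle is not any single estimate but reconciling the two conflicting feasibility budgets with one concave function: the decreasing dynamic factor must cross $\sigma$ \emph{strictly inside} $\Omega(V,c_0)$ — too late and feasibility fails for $V>c_0$, too early and the interior over-decay (hence the acceleration) is lost. The coupled constraints $A\le\sigma+\sigma_0$ and $V^\star\le c_0$ in \eqref{eq:FPA-construction} are exactly what force this crossing into the right place, and checking their joint satisfiability (which consumes precisely the hypotheses $\sigma_0>0$, $c_0>0$) is the crux of the argument.
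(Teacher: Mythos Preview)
Your proof is correct and follows essentially the same approach as the paper: construct a strictly concave $\alpha_{cave}(y)=s(y)\,y$ whose decreasing dynamic factor crosses $\sigma$ at some $V^\star\le c_0$, verify feasibility by splitting $\mathbb{X}$ at the level $c_0$ (margin inside, baseline outside), and obtain the windowed rate improvement for small $\epsilon$. The only cosmetic difference is that the paper leaves the concave factor abstract with boundary values $s(0)=1+\sigma_0/\sigma$, $s(c_0)=1$ and invokes the composition lemma (Lemma~\ref{cor:rate-composition}) for the case $c>c_0$, whereas you fix the explicit rational form $A y/(1+By)$ and read off the rate from the closed-form crossing time~\eqref{eq:FPA-time}.
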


\begin{proof}
(i) Feasibility: Pick a \emph{concave factor} $s:[0,\infty)\!\to\!(0,\infty)$,
strictly decreasing, and satisfies
\begin{align*}
    s(0)=(1+\frac{\sigma_0}{\sigma}), ~s(c_0)=1
\end{align*}
Define $\alpha_{cave}(v)\doteq s(v)\,\sigma v$. Then $\alpha_{cave}\in\mathcal{K}_{cave}$,
$\alpha_{cave}(v)\le (\sigma+\sigma_0)v$ for $v\in(0,c_0]$ and $\alpha_{cave}(v)\le\sigma v$ $v\in(c_0,c_{\max}]$.
Thus \eqref{eq:cave-feasible} holds. (ii) Acceleration: If $c<c_0$, $s(v)>1$ holds for $v\in[\epsilon,c]$ with any $\epsilon<c$. Thus, in this case $\sigma_{\alpha_{cave}}(\epsilon,c)>\sigma$ for any $[\epsilon,c]\subset(0,c_0]$. If $c>c_0$, we have $\sigma_{cave}(\epsilon,c_0)>\sigma$ and $\sigma_{cave}(c_0,c)<\sigma$. By Lemma 5, we obtain there always exists a sufficiently small $\epsilon_0\in(0,c]$ such that for all $\epsilon\in(0,\epsilon_0)$, $\sigma_{\alpha_{cave}}(\epsilon,c)\in(\sigma,\sigma+\sigma_0)$.
\end{proof}
\begin{corol}[Uniform improvement on an interval]
Let $c\in(0,c_0]$ and  $s:\mathbb{R}_{\geq0}\to\mathbb{R}_{>0}$ be a  concave factor normalized by $s(c)=1$ and $s(0)=1+\sigma_0/\sigma$ (hence $s(v)>1$ for all $v\in(0,c)$). Define $\alpha_{cave}(v)=s(v)\sigma v$ and let $\alpha_l(v)=\sigma v$. Then, for every $\epsilon\in(0,c)$, $\sigma_{\alpha_{cave}}(\epsilon,c)>\sigma$.
\end{corol}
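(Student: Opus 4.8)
The plan is to reduce the claim to a single pointwise comparison of integrands, exploiting the closed form of the crossing time \eqref{eq:Talpha-def} and the normalization of the concave factor. First I would record the consequence of the hypotheses on $s$: since $s$ is strictly decreasing with $s(c)=1$ and $c\le c_0$, we have $s(v)>s(c)=1$ for every $v\in(0,c)$. Consequently $\alpha_{cave}(y)=s(y)\sigma y>\sigma y=\alpha_l(y)$ on the open interval $(0,c)$, with strict inequality at every point of $(\epsilon,c)$. This is precisely the quasi-linear representation \eqref{eq:alpha_quasilinear} specialized to a concave factor bounded strictly below by $1$ on the whole window.

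Next I would compare the crossing times through \eqref{eq:Talpha-def}. Because $s(y)>1$ gives $1/\alpha_{cave}(y)=1/(s(y)\sigma y)<1/(\sigma y)$ for every $y\in(\epsilon,c)$, integrating over $[\epsilon,c]$ yields
\[
T_{\alpha_{cave}}(\epsilon,c)=\int_\epsilon^c\frac{dy}{s(y)\sigma y}<\int_\epsilon^c\frac{dy}{\sigma y}=\frac{1}{\sigma}\ln\frac{c}{\epsilon}=T_{\alpha_l}(\epsilon,c),
\]
where the strictness is guaranteed because the integrand inequality is strict on an interval of positive measure, not merely almost everywhere.

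Finally, I would translate this time inequality into a rate inequality via the definition \eqref{eq:sigma-alpha}. Since $\sigma_\alpha(\epsilon,c)=\ln(c/\epsilon)/T_\alpha(\epsilon,c)$ is strictly decreasing in $T_\alpha$ (the numerator $\ln(c/\epsilon)$ being a fixed positive constant for $\epsilon<c$), the strict bound $T_{\alpha_{cave}}(\epsilon,c)<\tfrac{1}{\sigma}\ln(c/\epsilon)$ immediately gives $\sigma_{\alpha_{cave}}(\epsilon,c)>\sigma$ for every $\epsilon\in(0,c)$.

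I expect no genuine obstacle here. The only point requiring care is confirming that the normalization together with strict monotonicity of $s$ forces $s>1$ \emph{strictly on the entire open interval} $(0,c)$, which is what upgrades the conclusion from a weak to a strict inequality. Indeed, this corollary is the clean specialization of case~(i) in the proof of Theorem~\ref{thm:FPA} (the regime $c\le c_0$), so the composition Lemma~\ref{cor:rate-composition} is not needed and the direct integral comparison above suffices.
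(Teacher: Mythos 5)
Your proof is correct and follows essentially the same route as the paper: the corollary is exactly the $c\le c_0$ case in the proof of Theorem~\ref{thm:FPA}, where strict decrease of the concave factor with $s(c)=1$ gives $s(v)>1$ on $(0,c)$, hence $\alpha_{cave}>\alpha_l$ pointwise and $T_{\alpha_{cave}}(\epsilon,c)<T_{\alpha_l}(\epsilon,c)$ via \eqref{eq:Talpha-def}. Your version merely makes explicit the integral comparison and the strictness argument that the paper leaves implicit, which is a faithful (and slightly more careful) rendering of the same idea.
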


\subsection{Example on the saturated single integrator}
Consider $\dot x=u$ with $\|u\|_\infty\le \theta$ and the CLF $V(x)=x^2$. Since $L_fV\equiv0$ and $L_gV=2x$, by definition
\[
  D_{\max}(x,\theta)=\bar\alpha(\theta,V)=2\theta\sqrt{V}.
\]
Then the (best) global exponential rate $ \sigma^\star={2\theta}/{\sqrt{c}}$ for $0<V\leq c$.
 The required 
actuation level for a  linear comparison $\alpha_l(V)=\sigma V$ is $\theta_{\min}(\alpha_{l};c)=\ubar{\theta}_{\min}(\alpha_l,c)=\sigma\sqrt{c}/2$. With a bang–bang law $u=-\theta\,\mathrm{sgn}(x)$,
\[
   \dot V=2xu=-2\theta|x|=-2\theta\sqrt{V}=-\bar\alpha(\theta,V),
\]
so the windowed nominal rate on $[\epsilon,c]$ is
\[
  \sigma_{\bar\alpha}(\epsilon,c)
  \;=\; \frac{\ln(c/\epsilon)}{\displaystyle\int_{\epsilon}^c \frac{dV}{2\theta\sqrt V}}
  \;=\; \frac{2\theta\,\ln(\sqrt c/\sqrt\epsilon)}{\sqrt c-\sqrt\epsilon}.
\]

\subsubsection*{Numerical preview $[\epsilon,c]=[10^{-4},100]$}
Here $\sigma^\star=0.2\,\theta$ while
$\sigma_{\bar\alpha}(\epsilon,c)\approx 1.382\,\theta$, i.e., about $6.9$ times
faster than the best linear rate with the same $\theta$. The endpoint–relaxation ratio
\[
  r_{\bar\alpha}(\epsilon,c)
  \;=\; \frac{\bar\alpha(\theta,c)}{\sigma_{\bar\alpha}(\epsilon,c)\,c}
  \;\approx\; 0.145,
\]
quantifies the reduction in endpoint value and, on this window, matches the required–actuation ratio at equal nominal rate (about $85.5\%$ savings).

\subsubsection*{On trade-offs of non-Lipschitz concave comparison}
The bang–bang law (similar to a first–order sliding controller\cite{LevantTACSMC}) is discontinuous at
$x=0$ and prone to chattering. In sampled implementations, an
insufficient sampling period can aggravate this: even when the comparison
function is Lipschitz in $V$, too large an effective slope can excite
high–frequency switching. To retain the decay advantages of concavity while avoiding chattering, the next section introduces a
\emph{Lipschitz–aware concave factor} $s(\cdot)$ shaping $\alpha(V)=s(V)\sigma V$
with an explicit slope bound,
chosen to match the sampling period and actuator bandwidth. With the rational
factor in \eqref{eq:rational-factor}, one has
\[
  0\ \le\ s_{\mathrm{rat}}(V)+V s'_{\mathrm{rat}}(V) \le k_{\max}
  \Longrightarrow
  \Big|\tfrac{d}{dV}\alpha(V)\Big|\le \sigma k_{\max},
\]
so $k_{\max}$ directly limits the effective slope.

\section{Constructive Concave Factors}\label{sec:construct}
The design variable in Theorem~\ref{thm:FPA} is the \emph{concave factor} $s(\cdot)$. In practice (simulation or sampled control), overly fast decay laws can cause chattering or numerical oscillations when the sampling period is not small enough \cite{numeicalchattering}. To address this, we use \emph{Lipschitz-aware} concave factors whose slope can be explicitly capped. A convenient  $C^\infty$ family on $(0,\infty)$ is  \emph{rational} form
\begin{equation}\label{eq:rational-factor}
  s_{\mathrm{rat}}(v)=\frac{k_{\min} v + k_{\max}\ell}{v + \ell},
  \quad 0\leq k_{\min}<k_{\max}, ~\ell>0,
\end{equation}
which satisfies $s_{\mathrm{rat}}(0)=k_{\max},s_{\mathrm{rat}}(\infty)=k_{\min}$, and
\begin{align*}
    s'_{\mathrm{rat}}(v)=-\frac{(k_{\max}-k_{\min})\ell}{(v+l)^2},~
     s''_{\mathrm{rat}}(v)=\frac{2(k_{\max}-k_{\min})\ell}{(v+l)^3}
\end{align*}
We can design a concave comparison function by multiplying a linear comparison with a concave factor.
\begin{lem}[Concavification via a rational factor]
\label{lem:rat-concave}
For any $\alpha_l(v)=\sigma v\in\mathcal{K}_l$ with $\sigma>0$, the product
\[
  \alpha(v)\;\doteq\; s_{\mathrm{rat}}(v)\,\alpha_l(v)=s_{\mathrm{rat}}(v)\sigma v
\]
belongs to $\mathcal{K}_{cave}$ and is locally Lipschitz on $[0,\infty)$.
\end{lem}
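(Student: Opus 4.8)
The plan is to verify the three defining properties directly from the quasi-linear representation $\alpha(v)=\sigma\,v\,s_{\mathrm{rat}}(v)$, reusing the derivatives of $s_{\mathrm{rat}}$ already tabulated above. Set $\Delta\doteq k_{\max}-k_{\min}>0$. Membership in $\mathcal{K}$ is nearly immediate: $s_{\mathrm{rat}}$ is a rational function whose denominator $v+\ell$ stays strictly positive on $[0,\infty)$, so $\alpha$ is continuous (indeed $C^\infty$ on $(0,\infty)$), and $\alpha(0)=\sigma\cdot 0\cdot s_{\mathrm{rat}}(0)=0$.

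For strict monotonicity and strict concavity I would differentiate. First, $\alpha'(v)=\sigma\bigl[s_{\mathrm{rat}}(v)+v\,s'_{\mathrm{rat}}(v)\bigr]$; substituting the given $s_{\mathrm{rat}}$ and $s'_{\mathrm{rat}}$ and combining over $(v+\ell)^2$, the bracket collapses to $k_{\min}+\Delta\ell^2/(v+\ell)^2$, which is strictly positive for every $v\ge 0$ since $\Delta,\ell>0$ and $k_{\min}\ge 0$; hence $\alpha$ is strictly increasing. Second, $\alpha''(v)=\sigma\bigl[2\,s'_{\mathrm{rat}}(v)+v\,s''_{\mathrm{rat}}(v)\bigr]$; substituting the tabulated $s'_{\mathrm{rat}},s''_{\mathrm{rat}}$ and clearing the common denominator $(v+\ell)^3$, the $v$-dependent terms cancel and leave $\alpha''(v)=-2\sigma\Delta\ell^2/(v+\ell)^3<0$ for all $v\ge 0$. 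Together these place $\alpha$ in $\mathcal{K}_{cave}$.

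For local Lipschitz continuity on $[0,\infty)$, I would note that $\alpha$ is $C^\infty$ away from the origin and that the closed form $\alpha'(v)=\sigma\bigl[k_{\min}+\Delta\ell^2/(v+\ell)^2\bigr]$ is uniformly bounded, taking values in $[\sigma k_{\min},\sigma k_{\max}]$ with finite one-sided value $\alpha'(0^+)=\sigma k_{\max}$. A global bound on the derivative yields a global (hence local) Lipschitz constant $\sigma k_{\max}$, which also covers the boundary point $v=0$.

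I expect no genuine obstacle: the argument is purely computational. The only point needing a moment's care is the algebraic simplification of the two numerators—confirming the cancellations that render $\alpha'$ bounded and $\alpha''$ sign-definite—which must hold uniformly in $v$ and remain valid in the degenerate case $k_{\min}=0$, where $\alpha'(v)\to 0$ as $v\to\infty$ yet stays strictly positive at every finite $v$.
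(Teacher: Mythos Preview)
Your proposal is correct and follows essentially the same route as the paper: the paper's proof sets $F(v)=v\,s_{\mathrm{rat}}(v)$, asserts that direct differentiation gives $F'>0$ and $F''<0$, and deduces Lipschitzness from $|F(v_1)-F(v_2)|\le k_{\max}|v_1-v_2|$. Your argument simply carries out those differentiations explicitly (obtaining the closed forms $\alpha'(v)=\sigma\bigl[k_{\min}+\Delta\ell^2/(v+\ell)^2\bigr]$ and $\alpha''(v)=-2\sigma\Delta\ell^2/(v+\ell)^3$) and reads off the same Lipschitz constant $\sigma k_{\max}$ from the derivative bound, so the two proofs coincide up to the level of detail shown.
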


\begin{proof}
Let $F(v)\doteq v\,s_{\mathrm{rat}}(v)=\frac{k_{\min}v^2+k_{\max}\ell v}{v+\ell}$. A direct differentiation yields $F'(v)>0$ and $F''(v)<0$ impling concavity. Multiplying by the positive constant $\sigma$ preserves strict concavity. Local Lipschitzness follows from $|F(v_1)-F(v_2)|\leq k_{\max}|v_1-v_2|$.
\end{proof}

\begin{corol}[Composition of rational factors]
\label{cor:power-comp}
For $p\in(0,1)$, the composed factor $\tilde s(v)\doteq s_{\mathrm{rat}}(v^{p})$ is again a concave factor in the sense that
$\tilde F(v)\doteq \tilde{s}(v)v=s_{rat}(v^p)v$ 
is strictly concave on $(0,\infty)$.
\end{corol}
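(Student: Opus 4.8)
The plan is to show directly that $\tilde F(v)=s_{\mathrm{rat}}(v^{p})\,v$ has strictly negative second derivative on $(0,\infty)$; once this is established, $\tilde F$ is strictly concave, and verifying $\tilde F(0)=0$, $\tilde F>0$, and $\tilde F'>0$ places it in $\mathcal{K}_{cave}$, so that $\tilde s(v)=s_{\mathrm{rat}}(v^{p})=\tilde F(v)/v$ is a concave factor in the sense of Definition~\ref{def:concave factor}.

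First I would rewrite $s_{\mathrm{rat}}$ in the affine-plus-reciprocal form
\[
  s_{\mathrm{rat}}(w)=k_{\min}+\frac{(k_{\max}-k_{\min})\,\ell}{w+\ell},
\]
which isolates a constant part and a strictly decreasing part. Substituting $w=v^{p}$ and multiplying by $v$ gives
\[
  \tilde F(v)=k_{\min}\,v+(k_{\max}-k_{\min})\,\ell\,H(v),\qquad
  H(v)\doteq\frac{v}{v^{p}+\ell}.
\]
The term $k_{\min}v$ is affine (zero curvature) and $(k_{\max}-k_{\min})\ell>0$, so strict concavity of $\tilde F$ is equivalent to strict concavity of the single reduced function $H$. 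This reduction is the crux: it collapses the two tunable gains to an irrelevant affine shift and a positive scaling, leaving only the exponent $p$ and the shift $\ell$ to analyze.

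The remaining step is a short computation of $H''$. Differentiating twice and clearing the positive factor $(v^{p}+\ell)^{3}$, the sign of $H''$ is governed by its numerator, which I expect to collapse to
\[
  M(v)=-p(1-p)\,v^{2p-1}-p(1+p)\,\ell\,v^{p-1}.
\]
Both terms carry a negative coefficient for $p\in(0,1)$ and $\ell>0$, and $v^{2p-1},v^{p-1}>0$ on $(0,\infty)$, so $M(v)<0$ and hence $H''(v)<0$ everywhere. Strict concavity of $H$, and therefore of $\tilde F$, follows; combining this with $\tilde F'(v)=k_{\min}+(k_{\max}-k_{\min})\ell\,H'(v)>0$ and $\tilde F(0)=0$ confirms $\tilde F\in\mathcal{K}_{cave}$.

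The main obstacle is conceptual rather than computational: one would like to reuse Lemma~\ref{lem:rat-concave}, but the inner power $v^{p}$ destroys the rational structure, and the natural decomposition $\tilde F(v)=v^{1-p}F(v^{p})$ (with $F(w)=w\,s_{\mathrm{rat}}(w)$ strictly concave) is a \emph{product} of two concave increasing functions, which need not be concave—so no purely structural shortcut via composition applies. The affine-plus-reciprocal reduction to $H$ is precisely what renders the second-derivative sign transparent and sidesteps this difficulty.
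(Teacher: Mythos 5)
Your proposal is correct and takes essentially the same route as the paper, whose proof is merely the hint ``by differentiation, $\tilde F'(v)>0$, $\tilde F''(v)<0$'': your affine-plus-reciprocal split $s_{\mathrm{rat}}(w)=k_{\min}+\frac{(k_{\max}-k_{\min})\ell}{w+\ell}$ reduces that differentiation to the single function $H(v)=\frac{v}{v^{p}+\ell}$, and the numerator you predict, $M(v)=-p(1-p)v^{2p-1}-p(1+p)\ell v^{p-1}$, is exactly what the computation yields, giving $H''<0$ and hence $\tilde F''<0$ on $(0,\infty)$. Your side remark is also sound: the decomposition $\tilde F(v)=v^{1-p}F(v^{p})$ is a product of concave increasing functions, which indeed need not be concave, so the explicit curvature check cannot be bypassed by a structural composition argument.
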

\begin{proof} \emph{Hint:} By differentiation, we have $\tilde{F}'(v)>0,\tilde{F}''(v)<0$.
\end{proof}

This shows that composing the rational factor with a sublinear power $v\mapsto v^p$ ($p\in(0,1)$) preserves the “concave factor” property (i.e., $v\mapsto s(v^p)v$ stays strictly concave). It’s a useful strengthening of the rational family since it adds an extra shape parameter $p$ to steepen the near-origin gain while relaxing upper endpoint tuning.

\subsection{Concave–shape tuning}
We start from a linear baseline $\alpha_l(v)=\sigma v$, where $\sigma$ is the best
(global) exponential rate available on $\mathbb{X}$. Consider an evaluation window
$[\epsilon,c]$ and design a rational concave factor
\[
  s_{\rm rat}(v) = \frac{k_{\min} v + k_{\max}\,\ell}{v+\ell},
  \quad 0<k_{\min}<1<k_{\max},\ \ell>0,
\]
so that the concave comparison $\alpha(v)\doteq s_{\rm rat}(v)\sigma v$ 
meets performance and feasibility goals on $[\epsilon,c]$. The target nominal rate $\sigma^\star>\sigma$ such that $\sigma_{\alpha}(\epsilon,c)\geq\sigma^\star$.

\subsubsection{Endpoint normalization}
We set $c=V(x_0)$ and impose $s_{rat}(c)=r\leq1$ so the endpoint  matches the baseline:
\begin{equation}\label{eq:s-at-c}
  s_{rat}(c)=r 
  \iff \ell = \frac{(r-k_{\min})c}{k_{\max}-r}.
\end{equation}
This leaves two free shape parameters $(k_{\min},k_{\max})$ with $k_{\min}\in(0,1)$, $k_{\max}>0$. 

\subsubsection{Exact nominal rate under $s_{rat}$}
The guaranteed crossing time by $\alpha$ is
\begin{align*}
   T_\alpha(\epsilon,c)&=\frac{1}{\sigma}\int_{\epsilon}^{c}\frac{y+\ell}{y(k_{\min}y+k_{\max}\ell)}\,dy \\
   &=\frac{1}{\sigma}\bigg[
      \frac{1}{k_{\max}}\ln\!\frac{c}{\epsilon}
      +\frac{k_{\max}-k_{\min}}{k_{\max}k_{\min}}
        \ln\!\frac{k_{\min}c+k_{\max}\ell}{k_{\min}\epsilon+k_{\max}\ell}
    \bigg]
\end{align*}
Hence, the closed form  rate (cf.\ \eqref{eq:sigma-alpha}) is  
\begin{equation}\label{eq:sigma-closedform}
  \sigma_{\alpha}(\epsilon,c)
  = 
    \frac{\sigma\,k_{\max}\ln(c/\epsilon)}{\displaystyle
      \ln(c/\epsilon)
      + \frac{k_{\max}-k_{\min}}{k_{\min}}\,
        \ln\!\frac{k_{\min}c+k_{\max}\ell}{k_{\min}\epsilon+k_{\max}\ell}}>\sigma^\star.
\end{equation}
By \eqref{eq:sigma-closedform}, we can select a  low $k_{\min}<1$ and solve $k_{\max}$, or select a sufficiently high $k_{\max}>\sigma^\star/\sigma$ and solve $k_{\min}$.
\subsubsection{Feasibility check (necessity)}
The sufficient condition of feasibility  is that $\alpha(V(x))\leq D_{\max}(\theta,x)$ along the trajectory $x(t)$. However, the trajectory $x(t)$ is usually not available ahead.  The level-wise cap 
bound presented in \eqref{eq:level-wise condition}
\begin{align*}
    \alpha(v)\le \bar\alpha(\theta,v)\leq k_3 v + k_4\,\theta \sqrt{v},\qquad \forall v\in(0,c_{\max}].
\end{align*}
 provides a necessary condition, offering a good estimate. 
Thus,  choose $s_{\mathrm{rat}}(\epsilon)<\frac{k_3}{\sigma}+\frac{k_3\theta}{\sqrt{\epsilon}}$. Combined with \eqref{eq:s-at-c}, this yields an explicit admissible region for $(k_{\min},k_{\max})$ with $s(c)=r$.

\subsubsection*{Practical tuning recipe}
\begin{enumerate}
  \item Choose  $[\epsilon,c]$ and target nominal rate $\sigma^\star$ and baseline $\sigma$.
  \item Pick a sufficiently low $k_{\min}\in[0,1)$ for keeping feasibility. Pick an aggressive $k_{\max}>\sigma^\star/\sigma>1$.
  \item Enforce $s_{\mathrm{rat}}(c)=r\leq1$ via \eqref{eq:s-at-c} to get $\ell$.
  \item Verify $\sigma_{\alpha}(\epsilon,c)\geq\sigma^\star$. If not, increase $k_{\max},r$ and decrease $k_{\min}$. 
  \item Verify feasibility using the caps $D_{\max}(x,\theta), \bar{\alpha}(\theta,v)$ if available. If not, directly tune based on simulation.
  \item If feasibility fails (input is saturated), decrease $r$ for relaxing constraint in high-level set and  $k_{\max}$ in low-level set.
  \item If needed, according to the result of Corollary \ref{cor:power-comp}, use $s_{rat}(v^p)$, e.g., $p=0.5$.
\end{enumerate}

\section{CLF--QP with Concave Comparison}\label{sec:CLFQP}
We integrate the concave comparison framework into a standard CLF--QP pipeline in which the Lyapunov decay rate is designed directly. 
The key idea is to replace the linear comparison $\dot V(x,u)\le -\sigma V$ with a \emph{concave} constraint
\begin{equation*}
  \dot V(x,u)\le -s\big(V(x)\big)\sigma V(x)
  \label{eq:concave-decay-constraint}
\end{equation*}
where $s:\mathbb{R}_{\geq0}\to\mathbb{R}{>0}$ is a rational concave factor (unless noted, $s(\cdot)$ is the rational concave factor in following contexts), so as to preserve feasibility under actuator limits while strictly increasing the windowed nominal rate $\sigma_{\alpha}(\epsilon,c)$ on a target window $[\epsilon,c]$ (Sec.~\ref{sec:concave}). \label{sec:clfqp-main}

\subsection{Concave-Shaped CLF–QP}\label{subsec:clfqp-formulations}
Consider the control–affine system \eqref{eq:system} with a smooth CLF $V$ and an input bound $\|u\|_\infty\le\theta$.
Given a baseline exponential rate $\sigma>0$, define
\[
  \alpha_c(V) \;\doteq\; s(V)\,\sigma\,V \;\in\; \mathcal{K}_{\mathrm{cave}}.
\]
The function $\alpha_c$ reshapes the linear baseline while reducing the endpoint value if we impose $s(c)<1$ at the endpoint $V=c$ in the estimate window (endpoint normalization).

\subsubsection*{Hard constraint (feasibility required)}
The CLF–QP with a hard concave comparison constraint is
\begin{align}
\min_{u}\quad & u^\top 
u \nonumber\\
\text{s.t.}\quad
& L_fV(x) + L_gV(x)u + s\big(V(x)\big)\sigma V(x) \le 0,\label{eq:clfqphard}\\
& \|u\|_\infty \;\le\; \theta \nonumber
\end{align}
When $\theta=\infty$, \eqref{eq:clfqphard} reduces to the \emph{mini-norm} CLF controller (see \cite{ames2013towards,ames2014rapidly} for the details).
When \eqref{eq:clfqphard} holds for all $x\in\mathbb{X}$, it yields
$\dot V \le -\alpha_c(V)=-s(V)\sigma V$ globally. On any window $[\epsilon,c]\subset(0,c_{\max}]$, if $s(c)=1$, implying $s(v)>1$ on a subset of $(0,c)$ with positive measure, then
\[
  T_{\alpha_c}(\epsilon,c) \;<\; T_{\alpha_l}(\epsilon,c)\qquad\text{and}\qquad
  \sigma_{\alpha_c}(\epsilon,c) \;>\; \sigma,
\]
i.e., the guaranteed decay strictly improves while keeping the same endpoint. If $s(c)=r<1$, than we have to tune $k_{\min},k_{\max}$ to achieve the target using \eqref{eq:sigma-closedform}.

\subsubsection*{Soft constraint (always feasible)}
Introducing a nonnegative slack $\delta$ yields the soft CLF–QP:
\begin{align}
\min_{u,\delta}\quad & \,u^\top H u \;+\; q\delta^2 \label{eq:clfqpslack}\\
\text{s.t.}\quad
& L_fV(x) + L_gV(x)u + s\big(V(x)\big)\sigma V(x) \le \delta,\nonumber\\
& \|u\|_\infty \le \theta,\quad \delta\ge0,\nonumber
\end{align}
with $H\succ0$ and $q>0$. This is always feasible; the optimal slack $\delta^\star(x)$ induces the closed–loop inequality
\begin{equation}\label{eq:Vdot-soft}
  \dot V(x) \le -s\big(V(x)\big)\sigma V(x) + \delta^\star(x).
\end{equation}

\begin{prop}[Acceleration for soft CLF--QP]
\label{prop:simple-soft}
Let $\alpha_{l}(V)=\sigma V$ and $\alpha_{c}(V)=s(V)\,\sigma V$, with normalized rational factor $s(c)=1$ (hence $s(V)>1$ on $(\epsilon,c)$).
Consider the soft CLF--QP with slack $\delta\ge 0$ as in \eqref{eq:clfqpslack}, and assume feasibility on $[\varepsilon,c]$.
Then the following each imply strict improvement:
\begin{enumerate}
\item  If $\delta^\star(x)=0$ for all states with $V(x)\in[\epsilon,c]$, then
$T_{\alpha_c}(\epsilon,c)<T_{\alpha_l}(\epsilon,c)$, equivalently $\sigma_{\alpha_c}(\epsilon,c)>\sigma$.
\item  If there exists an interval $(a,b)\subset(\epsilon,c)$ such that $\delta^\star(x)=0$ whenever $V(x)\in(a,b)$, then the same strict inequality holds.
\item  If there exists an interval $(a,b)\subset(\epsilon,c)$ such that
$\alpha_c(V)-\delta^\star(x)>\alpha_l(V)$ whenever $V(x)\in(a,b)$, then the same strict inequality holds.
\end{enumerate}
\end{prop}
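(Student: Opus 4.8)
The plan is to reduce each of the three cases to the windowed crossing-time comparison that has already been established for the hard-constraint setting, by controlling the effective decay rate implied by the soft inequality \eqref{eq:Vdot-soft}. The key observation is that the soft CLF--QP enforces $\dot V(x)\le -\alpha_c(V(x))+\delta^\star(x)$, so along any trajectory the certified comparison ODE becomes $\dot y = -\alpha_c(y)+\delta^\star$, and the relevant crossing time is $T(\epsilon,c)=\int_\epsilon^c \frac{dy}{\alpha_c(y)-\delta^\star(y)}$ wherever the effective decay $\alpha_c(y)-\delta^\star(y)$ is positive. The whole argument therefore hinges on comparing this integrand against $1/\alpha_l(y)=1/(\sigma y)$ pointwise and then integrating, exactly as in the proof of Lemma~\ref{lem:necessary-concave} and Proposition~\ref{prop:ordering}.

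First I would treat case (1): if $\delta^\star\equiv 0$ on the whole window, then the effective decay is exactly $\alpha_c(V)=s(V)\sigma V$ with $s(c)=1$ and $s(V)>1$ on $(\epsilon,c)$ by the normalization and strict monotonicity of the rational concave factor (Lemma~\ref{lem:rat-concave}, Lemma~\ref{Lem monotonicity of s}). Hence $\alpha_c(y)>\alpha_l(y)$ on $(\epsilon,c)$, so $1/\alpha_c(y)<1/\alpha_l(y)$ there, and integrating gives $T_{\alpha_c}(\epsilon,c)<T_{\alpha_l}(\epsilon,c)$, equivalently $\sigma_{\alpha_c}(\epsilon,c)>\sigma$ by \eqref{eq:sigma-alpha}. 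This is essentially a restatement of Proposition~\ref{prop:ordering} under the endpoint normalization. Next I would handle case (2): on the subinterval $(a,b)$ where $\delta^\star=0$ the effective decay again equals $\alpha_c(V)>\alpha_l(V)$, giving a strict gain $\int_a^b\!\big(\tfrac{1}{\alpha_l}-\tfrac{1}{\alpha_c}\big)dy>0$ on that subinterval; on the complementary parts of the window I would only need that the effective decay is at least $\alpha_l$, i.e.\ $\alpha_c(V)-\delta^\star(x)\ge\alpha_l(V)$, which holds because $s(V)\ge 1$ on $(\epsilon,c)$ must be supplemented by a bound on $\delta^\star$. Case (3) generalizes this: whenever $\alpha_c(V)-\delta^\star(x)>\alpha_l(V)$ on $(a,b)$, the same strict local gain appears, and elsewhere the effective decay meets the linear baseline, so the net crossing-time integral strictly decreases.

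The main obstacle, and the point needing the most care, is controlling the \emph{sign and magnitude of the effective decay $\alpha_c(V)-\delta^\star(x)$ outside the favorable subinterval}. The clean pointwise comparison only shows a strict improvement \emph{on} $(a,b)$; to conclude a strict decrease of the \emph{total} crossing time I must ensure the effective decay never falls below $\alpha_l$ on the remaining levels of $(\epsilon,c)$, so that no compensating \emph{increase} in crossing time occurs there. For cases (1) and (2) as stated this requires the implicit feasibility hypothesis to guarantee $\delta^\star(x)\le \alpha_c(V)-\alpha_l(V)=(s(V)-1)\sigma V$ on the window; I would either fold this into the feasibility assumption or state it explicitly, since without it a large slack at high levels could erase the interior gain. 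Once the effective decay is pinned to be $\ge\alpha_l$ everywhere on $(\epsilon,c)$ with strict excess on a set of positive measure, the integral inequality $T_{\mathrm{eff}}(\epsilon,c)<T_{\alpha_l}(\epsilon,c)$ and hence $\sigma_{\mathrm{eff}}(\epsilon,c)>\sigma$ follow immediately by the monotone comparison of integrands, completing all three cases.
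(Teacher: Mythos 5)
Your plan follows the same route as the paper's own (one-line) proof: interpret the soft constraint \eqref{eq:clfqpslack} as certifying the effective decay $\alpha_c(v)-\delta^\star$ along the trajectory (cf.\ \eqref{eq:Vdot-soft}), compare the integrand $1/(\alpha_c(v)-\delta^\star)$ pointwise against $1/(\sigma v)$, and integrate over the window to conclude $T_{\alpha_c}(\epsilon,c)<T_{\alpha_l}(\epsilon,c)$, equivalently $\sigma_{\alpha_c}(\epsilon,c)>\sigma$. Your case (1) is complete exactly as you argue: with $\delta^\star\equiv 0$ the claim reduces to the endpoint-normalized ordering of Proposition~\ref{prop:ordering}, using $s(c)=1$ and the strict monotonicity of the rational factor (Lemma~\ref{Lem monotonicity of s}). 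Where you go beyond the paper is in cases (2)--(3): the paper's proof simply states that wherever $-\alpha_c(V)+\delta^\star<-\alpha_l(V)$ the full-window integral $\int_{\epsilon}^{c}\frac{dv}{\alpha_c(v)-\delta^\star}$ is smaller than $\int_{\epsilon}^{c}\frac{dv}{\sigma v}$, which tacitly assumes that on the complement of the favorable interval $(a,b)$ the effective decay never drops below the linear baseline (and in particular stays positive, so the crossing-time integral is even well defined). You make this hidden hypothesis explicit --- $\delta^\star(x)\le (s(V)-1)\sigma V$, i.e.\ $\alpha_c(V)-\delta^\star\ge\alpha_l(V)$, off $(a,b)$ --- and correctly observe that without it a large slack at other levels could inflate the total crossing time and erase the local gain, so the proposition as literally stated needs this bound folded into the feasibility assumption. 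This is a genuine sharpening: your argument is the paper's argument made rigorous, and the repair you propose (strict excess on a set of positive measure plus the baseline bound everywhere else, then monotone comparison of integrands) is the right one.
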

\begin{proof}
    On any interval where the enforced bound satisfies $-\alpha_c(V)+\delta^\star<-\alpha_l(V)$, one has
$\int_{\epsilon}^{c}\frac{dv}{\alpha_c(v)-\delta^\star}<\int_{\epsilon}^{c}\frac{dv}{\alpha_l(v)}$,
yielding $T_{\alpha_c}(\epsilon,c)<T_{\alpha_l}(\epsilon,c)$ and hence $\sigma_{\alpha_c}(\epsilon,c)>\sigma$.
\end{proof}
\subsubsection*{Numerical issues induced by slack}
With a soft CLF constraint, the optimizer returns a slack $\delta^\star(x)\ge 0$ that is
rarely \emph{exactly} zero in floating point even when a minimum-norm controller is feasible.
The certified instantaneous rate is lower-bounded by
\[
-\frac{\dot V}{V}\ \ge\ s(V)\sigma\;-\;\frac{\delta^\star}{V}.
\]
When $V$ is moderate, a small absolute error $\Delta$ in the slack (solver tolerance,
round-off) has negligible effect: $(\delta^\star+\Delta)/V\approx \delta^\star/V$. 
However, as $V\!\to\!0$, division by $V$ amplifies any residual: even if $\delta^\star=0$,
a tolerance-sized perturbation $\Delta$ yields $(\delta^\star+\Delta)/V=\Delta/V\gg 0$
(e.g., $\Delta=10^{-4}$ and $V=10^{-4}$ produce a unit drop in the lower bound). 
This makes the soft CLF–QP appear significantly more conservative than the minimum-norm
(hard) controller near the origin, purely for numerical reasons.

\subsection{Existing CLF--QP frameworks for accelerating decay}
Prior CLF--QP approaches accelerate decay primarily by \emph{modulating a global exponential rate} rather than shaping the comparison function. Two representative lines are: (i) hybrid/time–scale scheduling via rapidly exponentially stabilizing CLFs (RES--CLFs) \cite{ames2014rapidly}; and (ii) optimization of the rate as a decision variable (''flexible'' CLF--QP) \cite{FlexCLF} and \cite{OD-CLF-QP}.

\subsubsection{Rapidly exponentially stabilizing CLF--QP}
RES--CLF details appear in \cite{ames2014rapidly}. The key is a family $V_{\varepsilon}$ with
\begin{align}
    \begin{split}
       &c_1\|x\|^2 \le V_\varepsilon(x) \le \tfrac{c_2}{\varepsilon^2}\|x\|^2 \\
        \inf_{u\in\mathbb{R}^m}&\Big\{ L_f V_\varepsilon(x) + L_g V_\varepsilon(x)\,u + \tfrac{c_3}{\varepsilon} V_\varepsilon(x) \Big\} \le 0
    \end{split}
\end{align}
where $c_1,c_2,c_3>0$ and $\varepsilon\in(0,1)$. Feasibility yields the decay guarantee
\[
  \dot V_\varepsilon(x) \le -\tfrac{c_3}{\varepsilon}\, V_\varepsilon(x) \doteq -\alpha_\varepsilon\!\big(V_\varepsilon(x)\big),
\]
so decreasing $\varepsilon$ increases the global exponential rate $c_3/\varepsilon$. Under input constraints $\|u\|_\infty\le \theta$, however, the maximum exponential rate is capped pointwise by
\[
  \sigma_{\max}(V) \doteq {\bar\alpha(\theta,V)}/{V},
\]
which typically decreases with $V$ (e.g., for $\bar\alpha(\theta,V)\leq k_3 V + k_4 \theta \sqrt{V}$). Consequently, feasibility at high levels often forces evaluation with a \emph{larger} $\varepsilon$ (hence a weaker windowed rate). Any practical acceleration then appears on sublevel sets where the soft constraint is inactive ($\delta^\star=0$).  This is consistent with the concave perspective: acceleration arises by reallocating decay away from the endpoint and toward regions with available margin.

\subsubsection{Flexible CLF--QP}
A flexible CLF--QP treats the rate as a decision variable:
\begin{align}
  \min_{u,\sigma}\quad & (1-\kappa(x))\,u^\top u \;+\; \kappa(x)\,\big(\sigma_{\max}-\sigma\big)^2 \label{eq:flex-obj}\\
  \text{s.t.}\quad & L_f V(x) + L_g V(x)\,u + \sigma\,V(x) \;\le\; 0, \label{eq:flex-clf}\\
  & \sigma_{\min} \le \sigma \le \sigma_{\max}, \qquad \|u\|_\infty \le \theta, \label{eq:flex-bounds}
\end{align}
with a locally Lipschitz weight $\kappa:\mathbb{R}^n\!\to[\kappa_{\min},\kappa_{\max}]$, $0<\kappa_{\min}\le \kappa_{\max}<1$. The optimizer selects $\sigma^\star(x)\in[\sigma_{\min},\sigma_{\max}]$; as $\kappa(x)\to 0$ the problem emphasizes input economy ($\sigma^\star\!\to\sigma_{\min}$), and as $\kappa(x)\to 1$ it favors larger rates ($\sigma^\star\!\to\sigma_{\max}$). With input bounds, the achievable rate is still limited by the endpoint cap via $\sigma^\star(x)\le \sigma_{\max}\!\big(V(x)\big)$, which usually decreases with $V$; hence global acceleration remains constrained by high-level feasibility.

\subsubsection{Discussion}
Regardless of the CLF--QP variant, under the standing assumptions and for a sufficiently large window $V(x)\in[\epsilon,c]$, actuation demand concentrates at higher levels where $\sigma_{\max}(V)$ is smallest. If we forgo enforcing a \emph{uniform} pointwise rate and instead certify the \emph{windowed nominal rate} $\sigma_{\alpha}(\epsilon,c)$, then shaping $\alpha$ becomes a powerful degree of freedom. The proposed concave factor $s(V)$ (with controlled slope) dynamically exploits any available slack, preserves feasibility under the same endpoint cap, and provides a computable $\sigma_{\alpha}(\epsilon,c)$ for offline tuning. If the soft CLF--QP returns $\delta^\star=0$, the target nominal rate is achieved; otherwise, the violation certifies that the target is unattainable under the input constraint.
\section{Case Studies}\label{sec:casestudy}
We present two case studies. The first (an inverted pendulum with torque saturation) is a
\emph{toy} yet representative example that walks through the main concepts:
(i) actuator–induced caps on Lyapunov decay; (ii) the endpoint–limited baseline exponential
rate; and (iii) acceleration via concave comparisons in a CLF–QP. The second (attitude control
for a quadrotor) illustrates that the same design carries over to a practical
system subject to tight actuator limits.

\subsection{Common evaluation protocol}
For both cases we use a smooth control Lyapunov function (CLF) $V$ and enforce a soft CLF–QP
with input bounds:
\begin{align}
\begin{split}
    \min_{u,\;\delta\ge0}\quad & u^\top H u + q\,\delta^2 \\
\text{s.t.}\quad
& L_fV(x) + L_gV(x)\,u + \alpha\!\big(V(x)\big)\;\le\; \delta,\\
& \|u\|_\infty\le \theta.
\end{split}
\end{align}
We compare $\alpha\in\{\alpha_l,\alpha_c\}$ where the \emph{linear} baseline is
$\alpha_l(V)=\sigma V$ and the \emph{concave} comparison is
$\alpha_c(V)=s_{\mathrm{rat}}(V)\,\sigma V$ with the rational concave factor
\[
s_{\mathrm{rat}}(v)\;=\;\frac{k_{\min}(v/c)+k_{\max}\,\ell}{(v/c)+\ell},\qquad
0<k_{\min}<1<k_{\max}.
\]
It is normalized at the window endpoint $c$ via $s_{\mathrm{rat}}(c)=r\le 1$, hence
$\ell=(r-k_{\min})/(k_{\max}-r)$ (see~\eqref{eq:s-at-c}). The choice $r=1$ enforces the
\emph{same endpoint} as the linear baseline ($\alpha_c(c)=\alpha_l(c)=\sigma c$); choosing
$r<1$ intentionally relaxes the endpoint to reduce peak actuation.
Given an initial condition $x_0$, we set $c=V(x_0)$ and $\epsilon=\xi c$ with
$\xi\in\{10^{-4},10^{-3},10^{-2}\}$. The baseline rate $\sigma$ is taken from the cited
reference for each system.

\subsubsection*{Metrics}
We mainly report: (i) the normalized decay $ V(x(t))/V(x_0)$; (ii) the exact crossing time $T(\epsilon,c)$ and the induced nominal rate
$\sigma_{\mathrm{nom}}(\epsilon,c)\doteq \ln(c/\epsilon)/T(\epsilon,c)$; (iii) the instantaneous exponential rate
$-\dot V/V$; and (vi) actuation usage $\|u\|_\infty$ and energy
$\int_0^{T(\epsilon,c)} u^\top u\,dt$.

\subsection{Inverted pendulum}
\label{subsec:pendulum-clfqp}
We consider a torque–saturated pendulum with viscous friction (as in \cite{choi2020cbfclfhelper}):
\begin{equation}\label{eq:ip-dyn}
  \dot\psi = \omega,\quad
  \dot\omega = \frac{m g l}{I}\sin\psi - \frac{b}{I}\,\omega - \frac{1}{I}u,
  \quad \|u\|_\infty\le \theta,
\end{equation}
with $m,l,I,b>0$, gravity $g$, state $x=[\psi,~\omega]^\top$, and torque bound $\theta$. 
We adopt a quadratic CLF $V(x)=x^\top P x$ with $P=P^\top\!\succ0$ from
\begin{equation}\label{eq:ip-lyap}
  A_{\rm clf}^\top P + P A_{\rm clf} = -\sigma_{\rm clf}\,Q_{\rm clf},\qquad
  Q_{\rm clf}\succ0,\ \ \sigma_{\rm clf}>0,
\end{equation}
where
\[
  A_{\rm clf} = \begin{bmatrix}
    0 & 1\\[2pt]
    \frac{m g l}{2 I} - K_1 & -\frac{b}{I}-K_2
  \end{bmatrix},\qquad K_1,K_2>0,
\]
which encodes a PD template and yields a valid CLF for \eqref{eq:ip-dyn} on a large region around $\psi=0$ \cite{choi2020cbfclfhelper}.

\subsubsection*{Parameter settings}
Given $x_0=[\pi/4\ \ 0.05]^\top$, set $c=V(x_0)$ and $\epsilon=\xi\,c$ with $\xi\in\{10^{-4},10^{-3},10^{-2}\}$.
Sampling time $\mathrm dt=1$\,ms. Parameters as in \cite{choi2020cbfclfhelper}:
$(m,l,b,g)=(1,1,0.01,9.81)$, $I=ml^2/3$, $K_1=6$, $K_2=5$, $\sigma_{\rm clf}=3$,
$Q_{\rm clf}=\mathrm{diag}(1,1)$, $q=10^5$, $\theta=10$, baseline $\sigma=3$
(near the best feasible exponential rate for $\theta=10$).
We set $k_{\max}=2.3$, $k_{\min}=0.1$, and $r\in\{1,0.9,0.8,0.7,0.6\}$.
All QPs use \texttt{quadprog}; dynamics are integrated by \texttt{ode45} with ZOH over each sampling interval.

\subsubsection*{Results}
To expose endpoint domination, we plot the decay rate cap along the trajectory,
$D_{\max}(x,\theta)$, overlaid with the candidate comparison functions (Fig.~\ref{fig:ip_dmax}):
the endpoint value $\alpha(c)$ pins the assignable exponential rate and strictly limits what can be certified. 
We then simulate the soft CLF--QP for the linear baseline and for concave designs with
$s_{\mathrm{rat}}(c)=r$. As shown in Table~\ref{tab:ip_metrics}, relaxing the endpoint ($r<1$)
reduces the peak actuation $\|u\|_\infty$, confirming the significance of the endpoint cap.

As predicted, with \emph{endpoint normalization} ($r=1$), the concave design yields a strictly larger
windowed nominal rate $\sigma_{\mathrm{nom}}(\epsilon,c)$ (equivalently, shorter $T(\epsilon,c)$) than the linear baseline. 
For $r\in\{1,0.9,0.8,0.7,0.6\}$ the controller maintains a large nominal rate while lowering peak actuation
(Fig.~\ref{fig:inverted_result}); notably, this acceleration is obtained \emph{without} increasing energy
$\int_0^{T_\epsilon} u^\top u\,\mathrm dt$.

Finally, with the mini-norm (hard) CLF controller ($\delta\equiv0$), the instantaneous rate
tracks the designed profile $s(V)\sigma$ and remains monotone increasing toward equilibrium, 
whereas the soft CLF--QP can exhibit small dips at very low $V$ due to the amplification of $\delta/V$
(Compare Fig.~\ref{fig:inv-mininorm} with  Fig.~\ref{fig:inverted_result}).

\begin{figure}
    \centering
    \includegraphics[width=1\linewidth]{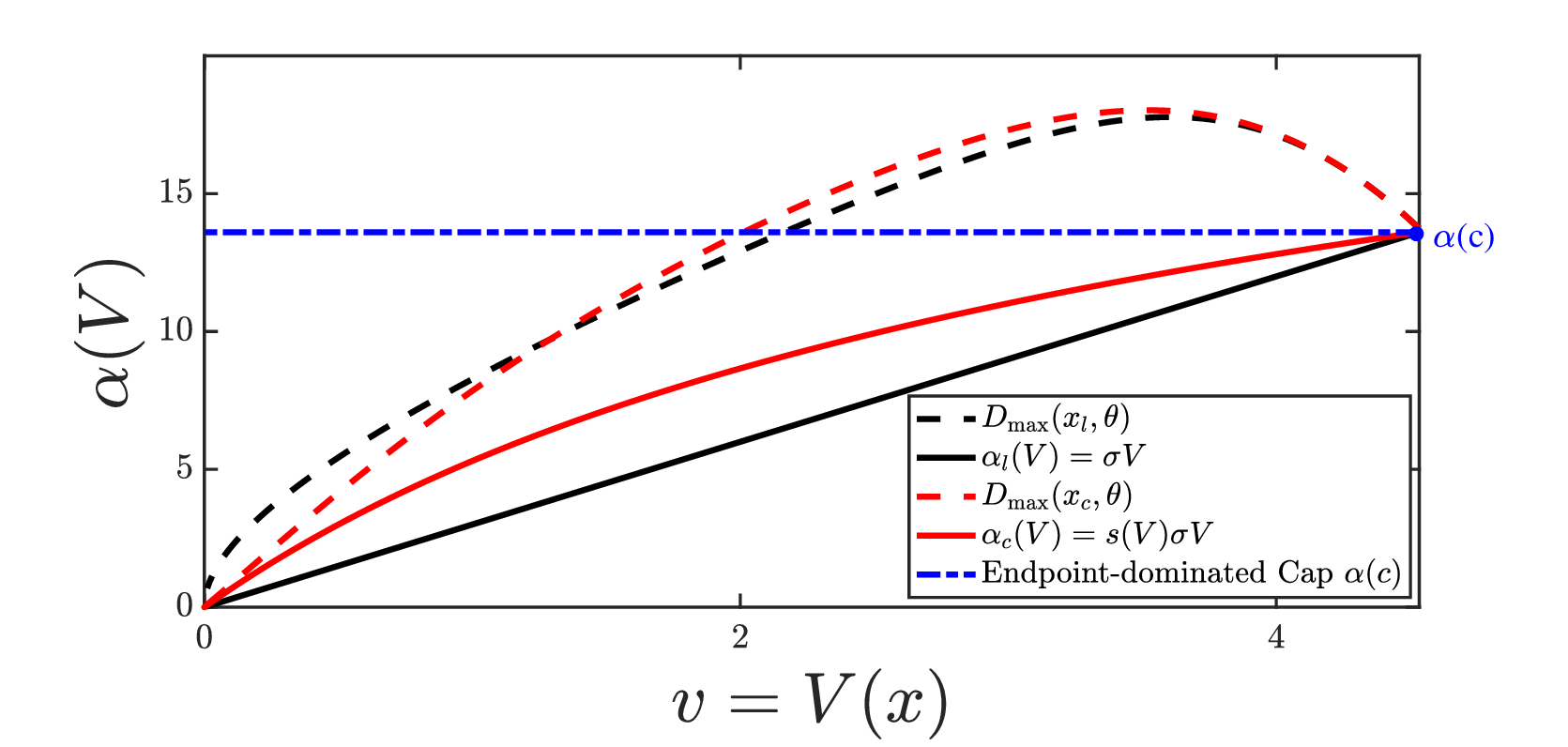}
    \caption{Assignable decay-rate cap $D_{\max}(x,\theta)$ along the trajectory for $\theta=10$ and the comparison functions. The endpoint value $\alpha(c)$ strictly restricts the assignable exponential rate.}
    \label{fig:ip_dmax}
\end{figure}

\begin{figure}
    \centering
    \includegraphics[width=\linewidth]{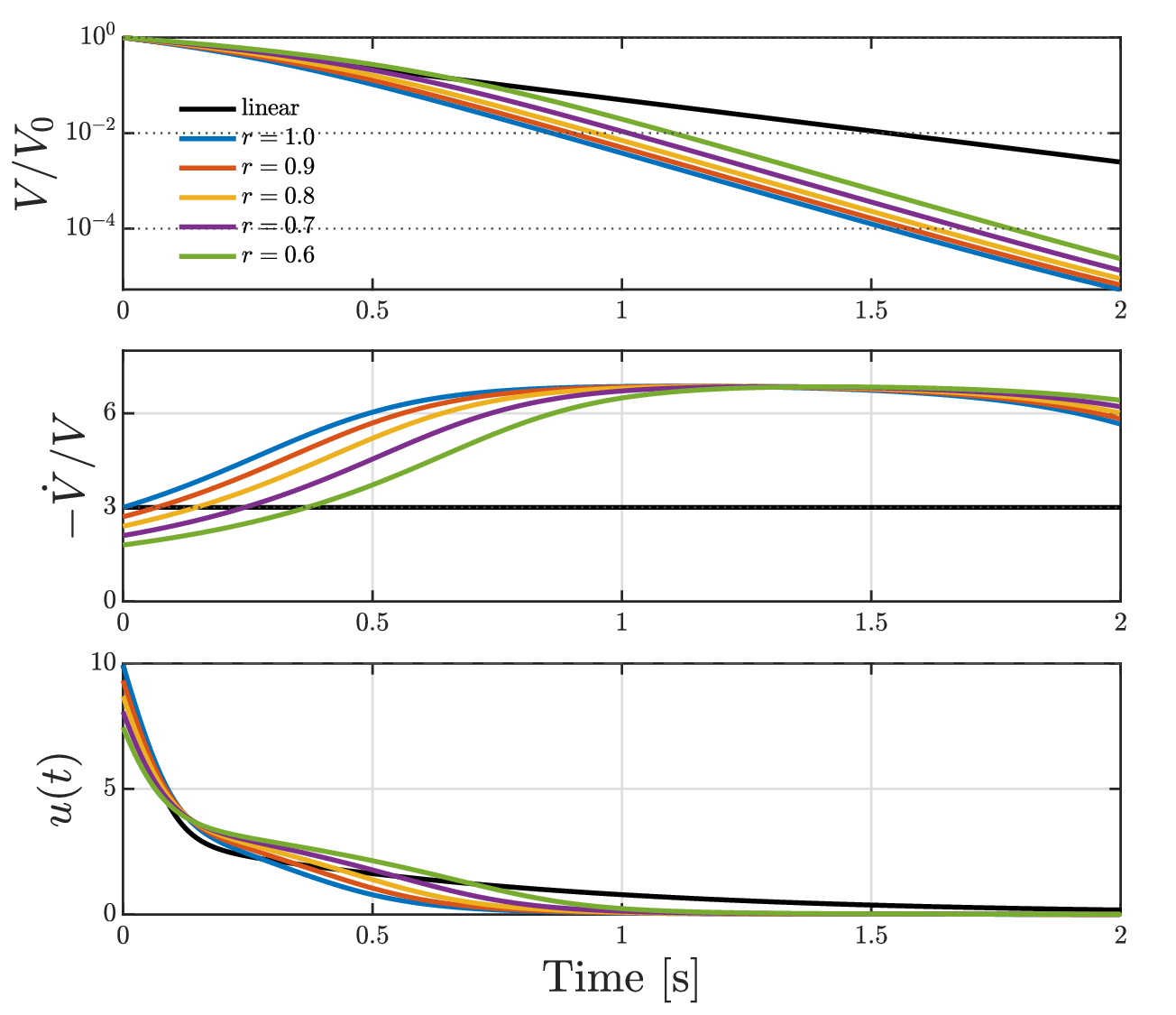}
    \caption{Inverted pendulum: Normalized Lyapunov response $V(x(t))/V(x_0)$, instantaneous exponential rate $-\dot V/V$, and control input $u$.
    For windows with $\epsilon\le 10^{-1}c$, all concave designs achieve a strictly larger nominal rate than the linear baseline (shorter crossing times). For very small $V\lesssim 10^{-4}c$, the instantaneous rate dip because $\delta/V$ grows as $V\to0$.
    In contrast, with the hard (mini-norm) CLF controller ($\delta\equiv0$), the instantaneous rate remains  increasing toward equilibrium, consistent with the slack analysis as shown in Fig.\ref{fig:inv-mininorm}.}
    \label{fig:inverted_result}
\end{figure}
\begin{figure}
    \centering
    \includegraphics[width=\linewidth]{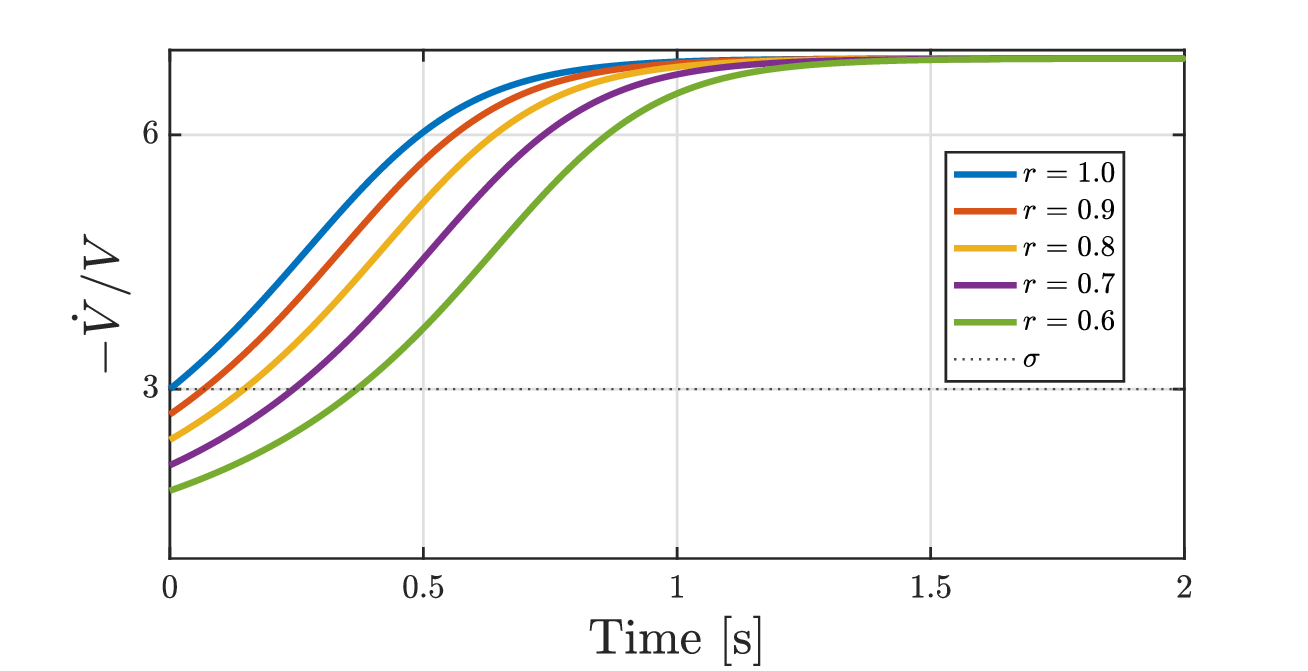}
    \caption{Inverted pendulum: Mini-norm controllers with concave constraints where the instantaneous rate tracks the designed profiles.}
    \label{fig:inv-mininorm}
\end{figure}

\begin{table*}[t]
\centering
\caption{Inverted Pendulum: Performance Metrics}
\label{tab:ip_metrics}
\begin{tabular}{l ccc ccc ccc c}
\toprule
& \multicolumn{3}{c}{$\epsilon_1 = 10^{-2}c$}
& \multicolumn{3}{c}{$\epsilon_2 = 10^{-3}c$}
& \multicolumn{3}{c}{$\epsilon_3 = 10^{-4}c$}
& \multirow{2}{*}{$\|u\|_\infty$} \\
\cmidrule(lr){2-4} \cmidrule(lr){5-7} \cmidrule(lr){8-10}
Controller 
& $T_{\epsilon_1}$ & $\sigma_{\text{nom}}$ & $\int_0^{T_{\epsilon_1}} u^2\,dt$
& $T_{\epsilon_2}$ & $\sigma_{\text{nom}}$ & $\int_0^{T_{\epsilon_2}} u^2\,dt$
& $T_{\epsilon_3}$ & $\sigma_{\text{nom}}$ & $\int_0^{T_{\epsilon_3}} u^2\,dt$
\\
\midrule
Linear   & 1.535 & 3.000 & 7.833 & 1.998 & 3.000 & 7.864 & 3.076 & 3.000 & 7.875 & 9.938 \\
$r=1.0$  & 0.860 & 5.361 & 7.501 & 1.196 & 5.779 & 7.502 & 1.535 & 6.004 & 7.502 & 9.938 \\
$r=0.9$  & 0.899 & 5.127 & 7.337 & 1.235 & 5.594 & 7.338 & 1.574 & 5.853 & 7.338 & 9.317 \\
$r=0.8$  & 0.948 & 4.858 & 7.265 & 1.286 & 5.377 & 7.266 & 1.624 & 5.673 & 7.266 & 8.697 \\
$r=0.7$  & 1.013 & 4.547 & 7.313 & 1.351 & 5.115 & 7.314 & 1.690 & 5.452 & 7.314 & 8.078 \\
$r=0.6$  & 1.102 & 4.181 & 7.530 & 1.441 & 4.797 & 7.531 & 1.780 & 5.178 & 7.531 & 7.455 \\
\bottomrule
\end{tabular}
\end{table*}

\subsection{Quadrotor attitude under actuator limits}\label{subsec:quad}
We consider the quadrotor model of \cite{lee2010geometric,lee2010control}:
\begin{align}
\dot{x}_p&=x_v,\\
m\dot{x}_v&=m g e_3 - f R e_3,\\
\dot{R} &= R\hat{\omega},\\
J\dot{\omega}&=-\omega\times J\omega + u,
\end{align}
where $x_p,x_v\in\mathbb{R}^3$, $m\in\mathbb{R}$, $\omega\in\mathbb{R}^3$, $f\in\mathbb{R}$, $R\in \mathfrak{so}(3)$, $e_3=[0,0,1]^\top$, $J\in\mathbb{R}^{3\times3}$, $u\in\mathbb{R}^3$ with $\|u\|_{\infty}\le\theta$, and
\[
    \hat{\omega} = \begin{bmatrix}
        0 & -\omega_3 &\omega_2\\
        \omega_3 & 0 &-\omega_1\\
        -\omega_2 & \omega_1 & 0
    \end{bmatrix}.
\]
We compare the concave CLF–QP with the flexible CLF–QP presented in \cite{FlexCLF}, which is effective for attitude control. We adopt the same CLF from \cite{FlexCLF}:
\[
    V(x)= \frac{1}{2}e_{\omega}^\top J e_{\omega} + k_{R}\,\Psi(R,R_d) + k_c\,e_{R}^\top e_{\omega},
\]
where $e_{\omega}=\omega-R^\top R_d\omega_d$, $\Psi(R,R_d)=\tfrac{1}{2}\mathrm{tr}[I-R_d^\top R]$,
and $e_{R}=\tfrac{1}{2}(R_d^\top R-R^\top R_d)^\vee$ with the \emph{vee} map
$\vee:\mathfrak{so}(3)\to \mathbb{R}^3$   (see \cite{lee2010geometric,lee2010control}).

\subsubsection*{Parameter settings.}
We focus on attitude control with
\[
  J=\mathrm{diag}(0.0820,0.0845,0.1377)\ \mathrm{kg\,m^2}, \quad m=4.34\ \mathrm{kg},
\]
\[
  R(0) = \begin{bmatrix}
      0.2500 &-0.0580 & 0.9665\\
      0.4330 & 0.8995 &-0.0580\\
      -0.8660 & 0.4330 &0.2500
  \end{bmatrix},\quad \omega(0)=0,
\]
\[
  R_d = I,\ \omega_d = 0,\ k_R= 8.81,\ k_c=0.1377.
\]
All parameters match \cite{FlexCLF}. For flexible CLF–QP, $\kappa(x)= 0.9\left(1-e^{-0.9V(x)}\right)$ and $[\sigma_{\min},\sigma_{\max}]=[0.29,\,10]$ from \cite{FlexCLF}.
For the linear or concave CLF–QP, $H=J^{-1}$ and $q=300$. The baseline $\sigma=2$ is chosen to match \cite{FlexCLF}. 
We use the normalized rational concave factor with $k_{\min}=0.8$, $k_{\max}=2.5$, $r=\{0.85,0.95\}$ (so $s(c)=r$).
Sampling $\mathrm dt=1$\,ms. Simulations use the Drake Python library. 
To compare actuation demand, we set $\theta=11$ to qualify the baseline controller and report the realized $\|u\|_\infty$.

\subsubsection*{Results}
Table~\ref{tab:quad_metrics} summarizes the windowed nominal rates and energy. Concave CLF–QP gains higher $\sigma_{\mathrm{nom}}$ at comparable or lower energy and \emph{lower peak torque} as shown in Fig.~\ref{fig:quad_normV}.

\begin{table}[t]
\centering
\caption{Quadrotor Attitude: Performance Metrics}
\label{tab:quad_metrics}
\begin{tabular}{l cc cc c}
\toprule
& \multicolumn{2}{c}{$\epsilon_1 = 10^{-2}c$}
& \multicolumn{2}{c}{$\epsilon_2 = 10^{-3}c$}
& $\|u\|_\infty$ \\
\cmidrule(lr){2-3} \cmidrule(lr){4-5}
Controller 
& $\sigma_{\text{nom}}$ & $\int_0^{T_{\epsilon_1}} u^2\, dt$
& $\sigma_{\text{nom}}$ & $\int_0^{T_{\epsilon_2}} u^2\, dt$
& -- \\
\midrule
Flexible    & 2.835 & 0.921 & 2.827 & 0.922 & 10.899 \\
$r=0.95$    & 4.523 & 1.049 & 3.358 & 1.050 & ~7.899 \\
$r=0.85$    & 4.344 & 0.785 & 3.250 & 0.787 & ~7.068 \\
\bottomrule
\end{tabular}
\end{table}
\begin{figure}
    \centering
    \includegraphics[width=1\linewidth]{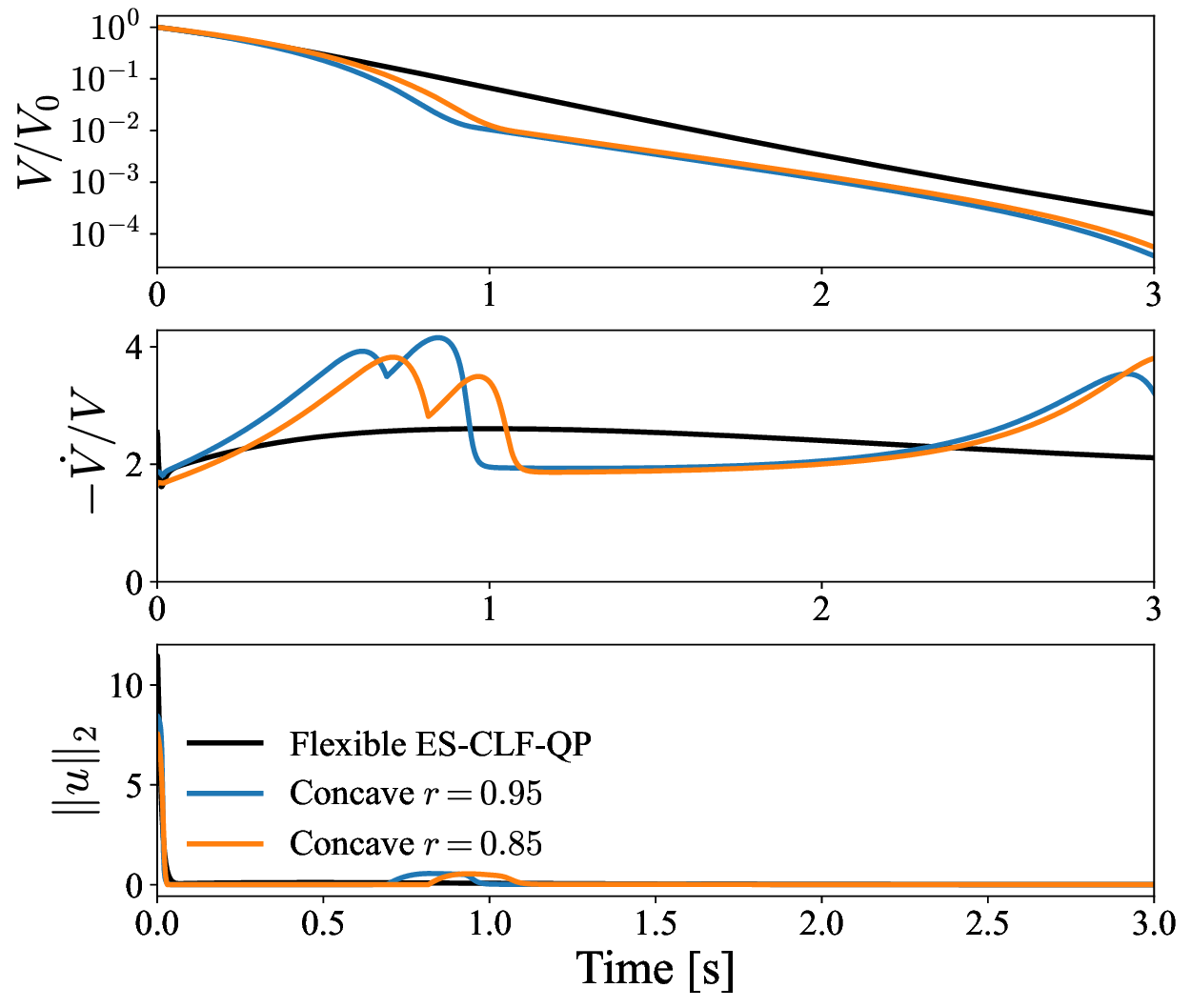}
    \caption{Quadrotor: normalized Lyapunov response $V(x(t))/V(x_0)$, instantaneous exponential rate $-\dot V/V$, and control input norm $\|u\|_2$. The concave CLF–QP yields faster decay than the flexible ES–CLF–QP after $t\approx0.5$\,s; along most of the trajectory, the instantaneous rate exceeds the baseline $\sigma=2$, consistent with the feasibility-preserving acceleration theorem. With $r=0.85$, faster decay is achieved with a lower peak input $\|u\|_\infty$ and reduced energy. Decreasing $r$ further reduces the actuation peak, consistent with endpoint-dominated actuation level.}
    \label{fig:quad_normV}
\end{figure}

\section{Conclusion}
We showed that shaping $\alpha$ rather than only scaling it is crucial for CLF design under actuator limits. Using the windowed nominal rate $\sigma_{\alpha}(\epsilon,c)$ and the endpoint cap $\bar\alpha(\theta,c)$, strictly \emph{concave} comparisons deliver feasibility-preserving acceleration. A rational, Lipschitz-aware factor yields a practical CLF–QP design with closed-form tuning, and case studies confirm faster guaranteed decay with reduced actuation level and energy. Beyond CLF–QP, the approach \emph{applies to any Lyapunov-based controller} that enforces a comparison inequality (e.g., backstepping, passivity, CCM, LMI). As evidence, our prior passivity-based controller for a 12-D saturated underwater vehicle benefits from concave shaping, improving tracking and robustness \cite{fan2024control}. Future work includes CLF/CBF co-design, and sampling-aware and slack-aware slope limits.

\section*{References}
\bibliographystyle{IEEEtran}
\bibliography{ref}

@inproceedings{ames2013towards,
  title={Towards the unification of locomotion and manipulation through control lyapunov functions and quadratic programs},
  author={Ames, Aaron D and Powell, Matthew},
  booktitle={Control of Cyber-Physical Systems: Workshop held at Johns Hopkins University, March 2013},
  pages={219--240},
  year={2013},
  organization={Springer}
}

@article{mestres2025regularity,
  title={Regularity properties of optimization-based controllers},
  author={Mestres, Pol and Allibhoy, Ahmed and Cort{\'e}s, Jorge},
  journal={European Journal of Control},
  volume={81},
  pages={101098},
  year={2025},
  publisher={Elsevier}
}

@ARTICLE{numeicalchattering,
  author={Acary, Vincent and Brogliato, Bernard and Orlov, Yury V.},
  journal={IEEE Transactions on Automatic Control}, 
  title={Chattering-Free Digital Sliding-Mode Control With State Observer and Disturbance Rejection}, 
  year={2012},
  volume={57},
  number={5},
  pages={1087-1101},
  keywords={Vectors;Equations;Attenuation;Lyapunov methods;Sliding mode control;Approximation methods;Backward Euler method;discrete-time sliding mode;disturbance compensation;sliding-mode;twisting controller;zero-order-hold method},
  doi={10.1109/TAC.2011.2174676}}

@inproceedings{lee2010geometric,
  title={Geometric tracking control of a quadrotor UAV on SE (3)},
  author={Lee, Taeyoung and Leok, Melvin and McClamroch, N Harris},
  booktitle={49th IEEE conference on decision and control (CDC)},
  pages={5420--5425},
  year={2010},
  organization={IEEE}
}

@article{lee2010control,
  title={Control of complex maneuvers for a quadrotor UAV using geometric methods on SE (3)},
  author={Lee, Taeyoung and Leok, Melvin and McClamroch, N Harris},
  journal={arXiv preprint arXiv:1003.2005},
  year={2010}
}

@article{ames2014rapidly,
  title={Rapidly exponentially stabilizing control lyapunov functions and hybrid zero dynamics},
  author={Ames, Aaron D and Galloway, Kevin and Sreenath, Koushil and Grizzle, Jessy W},
  journal={IEEE Transactions on Automatic Control},
  volume={59},
  number={4},
  pages={876--891},
  year={2014},
  publisher={IEEE}
}

@ARTICLE{Fixtime,
  author={Polyakov, Andrey},
  journal={IEEE Transactions on Automatic Control}, 
  title={Nonlinear Feedback Design for Fixed-Time Stabilization of Linear Control Systems}, 
  year={2012},
  volume={57},
  number={8},
  pages={2106-2110},
  keywords={Polynomials;Asymptotic stability;Algorithm design and analysis;Control systems;Stability analysis;Convergence;Numerical stability;Finite-time stability;polynomial feedback;second-order sliding mode control},
  doi={10.1109/TAC.2011.2179869}}

@ARTICLE{FTC,
  author={Bhat, S.P. and Bernstein, D.S.},
  journal={IEEE Transactions on Automatic Control}, 
  title={Continuous finite-time stabilization of the translational and rotational double integrators}, 
  year={1998},
  volume={43},
  number={5},
  pages={678-682},
  keywords={Control systems;Open loop systems;Adaptive control;Feedback;Stability;Linear systems;Cost function;Lighting control;Aerodynamics;Automatic control},
  doi={10.1109/9.668834}}

@misc{FlexCLF, author = {Logan Dihel and Nak-Seung P. Hyun}, title = {Flexible Convergence Rate for Quadratic Programming-Based Control Lyapunov Functions}, journal = {2024 IEEE 63rd Conference on Decision and Control (CDC)}, year = {2024}, doi = {10.1109/CDC56724.2024.10886305}, pages = {8845-8851}, abstract = {For many systems, there is an objective to minimize control effort while also providing a fast convergence rate. In the past decade, quadratic programming (QP) and control Lyapunov functions (CLF) have been combined to create online feedback controllers which minimize control effort subject to convergence rate guarantees. However, these convergence rate guarantees can sometimes be too slow, or require strong conditions for the closed loop system. Similarly, an existing controller may provide a fast convergence rate, but require too much control effort. In this paper, we introduce a flexible exponentially stabilizing CLF condition, which admits various convergence rates. This relaxation allows a novel QP-based controller with a state-dependent tuning parameter to adaptively tune between the convergence rate and control effort. We show the QP-based controller can be applied to any exponentially stabilizing CLF, and the solution is piecewise continuous with exponential convergence guarantees. A comparative study to other control types such as min-norm based control and geometric control for attitude tracking on \$\\mathrm\{SO\}(3)\$ is shown to validate the trade-off benefits of using the proposed flexible CLF control.} }

@ARTICLE{LMI,
  author={Chilali, M. and Gahinet, P.},
  journal={IEEE Transactions on Automatic Control}, 
  title={H/sub /spl infin// design with pole placement constraints: an LMI approach}, 
  year={1996},
  volume={41},
  number={3},
  pages={358-367},
  keywords={State feedback;Riccati equations;Control system synthesis;Stability;Linear matrix inequalities;Output feedback;Sufficient conditions;Shape control;Open loop systems;Control systems},
  doi={10.1109/9.486637}}

@ARTICLE{CCM,
  author={Manchester, Ian R. and Slotine, Jean-Jacques E.},
  journal={IEEE Transactions on Automatic Control}, 
  title={Control Contraction Metrics: Convex and Intrinsic Criteria for Nonlinear Feedback Design}, 
  year={2017},
  volume={62},
  number={6},
  pages={3046-3053},
  keywords={Measurement;Trajectory;Nonlinear systems;Adaptive control;Lyapunov methods;Control design;Power system stability;Contraction;control system synthesis;linear matrix inequalities;lyapunov methods;nonlinear control systems},
  doi={10.1109/TAC.2017.2668380}}

@INPROCEEDINGS{inverseCLF,
  author={Reher, Jenna and Kann, Claudia and Ames, Aaron D.},
  booktitle={2020 American Control Conference (ACC)}, 
  title={An Inverse Dynamics Approach to Control Lyapunov Functions}, 
  year={2020},
  volume={},
  number={},
  pages={2444-2451},
  keywords={Robots;Convergence;Lyapunov methods;Optimization;Dynamics;Task analysis;Hardware},
  doi={10.23919/ACC45564.2020.9147342}}

@inproceedings{ames2019control,
  title={Control barrier functions: Theory and applications},
  author={Ames, Aaron D and Coogan, Samuel and Egerstedt, Magnus and Notomista, Gennaro and Sreenath, Koushil and Tabuada, Paulo},
  booktitle={2019 18th European control conference (ECC)},
  pages={3420--3431},
  year={2019},
  organization={Ieee}
}

@ARTICLE{CLF-bipedal,
  author={Galloway, Kevin and Sreenath, Koushil and Ames, Aaron D. and Grizzle, Jessy W.},
  journal={IEEE Access}, 
  title={Torque Saturation in Bipedal Robotic Walking Through Control Lyapunov Function-Based Quadratic Programs}, 
  year={2015},
  volume={3},
  number={},
  pages={323-332},
  keywords={Actuators;Nonlinear systems;Torque control;Quadratic programming;Lyapunov methods;Hybrid systems;Real-time systems;Legged locomotion;Degradation;Quadratic programming;legged locomotion;Lyapunov methods;Quadratic programming;legged locomotion;Lyapunov methods},
  doi={10.1109/ACCESS.2015.2419630}}

@ARTICLE{CLFreview,
  author={Li, Boqian and Wen, Shiping and Yan, Zheng and Wen, Guanghui and Huang, Tingwen},
  journal={IEEE/CAA Journal of Automatica Sinica}, 
  title={A Survey on the Control Lyapunov Function and Control Barrier Function for Nonlinear-Affine Control Systems}, 
  year={2023},
  volume={10},
  number={3},
  pages={584-602},
  keywords={Uncertain systems;Uncertainty;Protocols;Optimal control;Nonlinear control systems;Stability analysis;Robustness;Control barrier function (CBF);control Lyapunov function (CLF);nonlinear-affine control systems},
  doi={10.1109/JAS.2023.123075}}

@INPROCEEDINGS{OD-CLF-QP,
  author={Shahraki, Milad Alipour and Lessard, Laurent},
  booktitle={2025 American Control Conference (ACC)}, 
  title={Spacecraft Attitude Control Under Reaction Wheel Constraints Using Control Lyapunov and Control Barrier Functions}, 
  year={2025},
  volume={},
  number={},
  pages={947-952},
  keywords={Space vehicles;Time-frequency analysis;Torque;Attitude control;Wheels;Numerical simulation;Real-time systems;Quadratic programming;Frequency control;Lyapunov methods},
  doi={10.23919/ACC63710.2025.11108025}}

@article{sontag1989universal,
  title={A ‘universal’construction of Artstein's theorem on nonlinear stabilization},
  author={Sontag, Eduardo D},
  journal={Systems \& control letters},
  volume={13},
  number={2},
  pages={117--123},
  year={1989},
  publisher={Elsevier}
}

@article{ArtsteinCLF,
title = "Stabilization with relaxed controls",
author = "Zvi Artstein",
year = "1983",
doi = "10.1016/0362-546X(83)90049-4",
language = "English",
volume = "7",
pages = "1163--1173",
journal = "Nonlinear Analysis-Theory Methods \& Applications",
issn = "0362-546X",
publisher = "Elsevier",
number = "11",
}

@misc{choi2020cbfclfhelper,
  author       = {Jason J. Choi},
  title        = {CBF-CLF-Helper 1.0: Library for Control Barrier Function (CBF) and Control Lyapunov Function (CLF) based control methods},
  year         = {2020},
  version      = {1.0.0},
  url          = {https://github.com/HybridRobotics/CBF-CLF-Helper}
}

@INPROCEEDINGS{constraintCLF,
  author={Mahmood, Maaz and Mhaskar, Prashant},
  booktitle={Proceedings of the 2010 American Control Conference}, 
  title={On constructing constrained control Lyapunov functions for linear systems}, 
  year={2010},
  volume={},
  number={},
  pages={5191-5196},
  keywords={Control systems;Lyapunov method;Linear systems;Open loop systems;Control design;Riccati equations;Stability analysis;Controllability;Actuators;Valves},
  doi={10.1109/ACC.2010.5530693}}

@article{Mhaskar2005,
  author    = {Pramod Mhaskar and Panagiotis D. Christofides},
  title     = {Stabilization of nonlinear systems with state and control constraints},
  journal   = {Automatica},
  volume    = {41},
  number    = {9},
  pages     = {1521--1531},
  year      = {2005},
  doi       = {10.1016/j.automatica.2005.04.001}
}

@article{Mhaskar2006,
  author   = {Pramod Mhaskar and Panagiotis D. Christofides},
  title     = {Robust control of nonlinear systems with bounded uncertainties},
  journal   = {Systems \& Control Letters},
  volume    = {55},
  number    = {1},
  pages     = {52--61},
  year      = {2006},
  doi       = {10.1016/j.sysconle.2005.04.004}
}

@article{lin1996smooth,
  title={A smooth converse Lyapunov theorem for robust stability},
  author={Lin, Yuandan and Sontag, Eduardo D and Wang, Yuan},
  journal={SIAM Journal on Control and Optimization},
  volume={34},
  number={1},
  pages={124--160},
  year={1996},
  publisher={SIAM}
}

@incollection{Mironchenkbook,
  title={Input-to-state stability},
  author={Mironchenko, Andrii},
  booktitle={Input-to-State Stability: Theory and Applications},
  pages={41--115},
  year={2023},
  publisher={Springer}
}

@inproceedings{fan2024control,
  title={Control of Underactuated Autonomous Underwater Vehicles With Input Saturation Based on Passivity Using Feedback Concavification},
  author={Fan, Shuyuan and Werner, Herbert},
  booktitle={2024 European Control Conference (ECC)},
  pages={3233--3238},
  year={2024},
  organization={IEEE}
}

@ARTICLE{ChatteringAnalysis,
  author={Levant, Arie},
  journal={IEEE Transactions on Automatic Control}, 
  title={Chattering Analysis}, 
  year={2010},
  volume={55},
  number={6},
  pages={1380-1389},
  keywords={Sliding mode control;Actuators;Sensor systems;Control systems;Mathematical model;Feedback;Vibration control;Vibration measurement;Temperature measurement;Sensor phenomena and characterization;Chattering effect;high-order sliding mode (HOSM);homogeneity;variable structure systems},
  doi={10.1109/TAC.2010.2041973}}

@ARTICLE{LevantTACSMC,
  author={Levant, A.},
  journal={IEEE Transactions on Automatic Control}, 
  title={Quasi-continuous high-order sliding-mode controllers}, 
  year={2005},
  volume={50},
  number={11},
  pages={1812-1816},
  keywords={Sliding mode control;Control systems;Robust control;Noise robustness;Error correction;Proportional control;Robust stability;Output feedback;Sampling methods;Uncertainty;Finite-time stability;high-order sliding mode;output feedback control;robustness},
  doi={10.1109/TAC.2005.858646}}

@book{khalil2002nonlinear,
  title={Nonlinear systems},
  author={Khalil, Hassan K and Grizzle, Jessy W},
  volume={3},
  year={2002},
  publisher={Prentice hall Upper Saddle River, NJ}
}

\end{document}